\newtheorem{thm}{Theorem}[section]
\newtheorem{proposition}[thm]{Proposition}
\newtheorem{lemma}[thm]{Lemma}
\newtheorem{cor}[thm]{Corollary}
\newtheorem*{thm*}{Theorem}
\newtheoremstyle{named}{}{}{\itshape}{}{\bfseries}{.}{.5em}{\thmnote{#3}}
\theoremstyle{named}
\newtheorem*{namedtheorem}{Theorem}
\theoremstyle{definition}
\newtheorem*{defin}{Definition}
\newtheorem*{ack}{Acknowledgments}
\newtheorem{question}[thm]{Question}
\theoremstyle{remark}
\newtheorem{remark}[thm]{Remark}
\def\conv{\operatorname{conv}}
\def\er{\mathbb R}
\def\R{\mathcal R}
\def\O{\mathcal O}
\def\U{\mathcal U}
\def\en{\mathbb N}
\def\C{\mathcal{C}}
\def\ov{\overline}
\def\eps{\varepsilon}
\def \dens {\operatorname{dens}}
\begin{document}
\author{Marek C\'uth}
\title{Simultaneous projectional skeletons}
\thanks{The author was supported by the Research grant GA \v{C}R P201/12/0290}
\email{cuthm5am@karlin.mff.cuni.cz}
\address{Charles University, Faculty of Mathematics and Physics, Sokolovsk\'a 83, 186 75 Praha 8 Karl\'{\i}n, Czech Republic}
\subjclass[2010]{46B26,54D30}
\keywords{retractional skeleton, projectional skeleton, Valdivia compacta, Plichko spaces, Asplund spaces}
\begin{abstract}
We prove the existence of a simultaneous projectional skeleton for certain subspaces of $C(K)$ spaces. This generalizes a result on simultaneous projectional resolutions of identity proved by M. Valdivia. We collect some consequences of this result. In particular we give a new characterization of Asplund spaces using the notion of projectional skeleton.\end{abstract}
\maketitle
\section{Introduction}

Systems of bounded linear projections on Banach spaces are an important tool for the study of structure of nonseparable Banach spaces. They enable us to transfer properties from smaller (separable) spaces to larger ones.

One of the important concepts of such a system is a \textit{projectional resolution of the identity} (PRI, for short); see, e.g. \cite{hajek} and \cite{fabianCervenaKniha} for a definition and results on constructing a PRI in various classes of spaces.

However, even better knowledge of the Banach space provides a \textit{projectional skeleton}. The class of spaces with a projectional skeleton was introduced by W. Kubi\'s in \cite{kubis}. Spaces with a 1-projectional skeleton not only have a PRI, but they form a $\mathcal{P}$-class; see, e.g., \cite[Definition 3.45]{hajek} and \cite[Theorem 17.6]{kubisKniha}. Consequently, an inductive argument works well when ``putting smaller pieces from PRI together'' and we may prove those spaces inherit certain structure from separable spaces. For example, every space with a projectional skeleton has a strong Markushevich basis and an LUR renorming; see, e.g., {\cite[Theorem 5.1]{hajek}} and {\cite[Theorem VII.1.8]{DGZ}}. Moreover, it is possible to characterize some classes of other spaces (e.g. WLD, Plichko and Asplund spaces) in terms of a projectional skeleton; see \cite{kubis} for more details.

One of the largest class of spaces admitting a PRI is related to Valdivia compact spaces.
\begin{defin}
Let $\Gamma$ be a set. We put $\Sigma(\Gamma) = \{x\in\er^\Gamma:\;|\{\gamma\in\Gamma:\;x(\gamma)\neq 0\}|\leq\omega\}$. Given a compact $K$, $A\subset K$ is called a \textit{$\Sigma$-subset} of $K$ if there is a homeomorphic embedding $h:K\to[0,1]^\kappa$ such that $A = h^{-1}[\Sigma(\kappa)]$. A compact space $K$ is said to be \textit{Valdivia compact} if there exists a dense $\Sigma$-subset of $K$.
\end{defin}
The following result is contained in \cite{valdivia}. Let us just note that there is proved even something more in \cite{valdivia}, but we will be interested only in the following statement.
\begin{namedtheorem}[Theorem A]\text{\textnormal{({\cite[Theorem 1]{valdivia}})}}
Let $K$ be a Valdivia compact space with a dense $\Sigma$-subset $A$. Let $(Y_n)_{n\in\en}$ be a sequence of $\tau_p(A)$-closed subspaces of $\C(K)$. If $\dens \C(K) = \mu$, then there is a PRI $\{P_\alpha:\;\omega\leq\alpha\leq\mu\}$ in $\C(K)$ such that $P_\alpha(Y_n)\subset Y_n,\;n\in\en,\omega\leq\alpha\leq\mu$.
\end{namedtheorem}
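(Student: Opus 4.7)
My approach is to realise the PRI via retractions on $K$ induced by the Valdivia embedding, using elementary submodels to force the simultaneity condition. Fix a homeomorphic embedding $h:K\hookrightarrow[0,1]^\Gamma$ with $A=h^{-1}[\Sigma(\Gamma)]$ (a weight computation lets us take $|\Gamma|=\mu$), and identify $K$ with $h(K)$. For each $M\subset\Gamma$ let $\pi_M$ denote the coordinate retraction of $[0,1]^\Gamma$ onto $[0,1]^M\times\{0\}^{\Gamma\setminus M}$. Whenever $\pi_M(K)\subset K$ and $\pi_M(A)\subset A$, the restriction $r_M:=\pi_M|_K$ is a retraction of $K$ and the operator $P_M:f\mapsto f\circ r_M$ is a norm-one linear projection on $\C(K)$ whose range has density at most $|M|+\aleph_0$.

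For each $\alpha\in[\omega,\mu]$ I would take an elementary submodel $\mathcal{M}_\alpha\prec(H(\theta),\in)$ of cardinality $|\alpha|$, with $|\alpha|\subset\mathcal{M}_\alpha$, containing as elements $K$, $A$, $h$, $\Gamma$, the sequence $(Y_n)_{n\in\en}$, and a selector $A\to[\Gamma]^{\leq\omega}$ giving countable supports; the chain $(\mathcal{M}_\alpha)$ is arranged to be increasing, continuous at limits, and with $\mathcal{M}_\mu\supset\Gamma$. Setting $M_\alpha:=\Gamma\cap\mathcal{M}_\alpha$ and $P_\alpha:=P_{M_\alpha}$, the classical Valdivia PRI argument (using that countable supports of points of $A$ lie inside $\mathcal{M}_\alpha$ together with density of $A$ in $K$) shows $\pi_{M_\alpha}(A)\subset A$ and $\pi_{M_\alpha}(K)\subset K$. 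The PRI axioms (commutation, density bound, continuity at limits, $P_\mu=\mathrm{id}$) then follow from the chain properties of $(M_\alpha)$.

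The heart of the argument is the simultaneity condition $P_\alpha(Y_n)\subset Y_n$. For $f\in Y_n$, I need $f\circ r_\alpha\in Y_n$, and since $Y_n$ is $\tau_p(A)$-closed it suffices to present $f\circ r_\alpha$ as a $\tau_p(A)$-limit of elements of $Y_n$. To build such a net I would use elementarity to produce a norm-dense subset of $Y_n$ living in $\mathcal{M}_\alpha$, combined with approximations of $r_\alpha|_A$ by objects from $\mathcal{M}_\alpha$ made available by the countable-support structure of $A$. This last step --- verifying that the approximating net is indeed $\tau_p(A)$-convergent to $f\circ r_\alpha$ and that its terms can be chosen inside $Y_n$ --- is where I expect the main technical work to lie: the closure conditions imposed on $\mathcal{M}_\alpha$ must be precisely strong enough that the comparatively weak $\tau_p(A)$-closedness of $Y_n$ (rather than norm closedness) still interacts well with the coordinate-projection retractions.
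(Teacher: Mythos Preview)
The paper does not prove Theorem~A directly: it is quoted from \cite{valdivia} as motivation, and the paper's contribution is the more general Theorem~\ref{tMain}. The way Theorem~A is recovered here is indirect --- a Valdivia compact with dense $\Sigma$-subset $A$ has a (commutative) retractional skeleton with $D(\mathfrak{s})=A$; Theorem~\ref{tMain} then produces a $1$-projectional skeleton on $\C(K)$ preserving each $Y_n$; finally one invokes \cite[Theorem~17.6]{kubisKniha} to extract a PRI from the skeleton. The technical core is Proposition~\ref{pClosedSubspace}, whose proof is a hands-on back-and-forth: one shows the set $\Gamma'=\{s:P_s(Y)\subset Y\}$ is $\sigma$-closed and unbounded by building, above any $s_1$, an increasing sequence $s_n$ where at each step one adjoins finitely many witness points from $D$ separating $\C(K)\setminus Y$ from a basic $\tau_p$-set.

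Your elementary-submodel approach is a different and perfectly reasonable packaging of the same underlying combinatorics; the PRI axioms for $(P_{M_\alpha})$ are indeed routine from the chain properties. The gap is in your description of the simultaneity step. Trying to exhibit $f\circ r_\alpha$ as a $\tau_p(A)$-limit of elements of $Y_n$ is the wrong direction: a norm-dense subset of $Y_n$ inside $\mathcal M_\alpha$ gives no handle on $f\circ r_\alpha$ for an arbitrary $f\in Y_n$ (which need not lie in, or be approximable from, $\mathcal M_\alpha$). The argument that actually works is contrapositive, and it is exactly what Proposition~\ref{pClosedSubspace} does without the model-theoretic language. Assume $f\in Y_n$ but $f\circ r_\alpha\notin Y_n$; choose $a_1,\dots,a_k\in A$ and rational intervals $o_1,\dots,o_k$ with $f(r_\alpha(a_i))\in o_i$ and $T(\{a_1\},\dots,\{a_k\};o_1,\dots,o_k)\cap Y_n=\emptyset$. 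The points $r_\alpha(a_i)$ have support contained in $M_\alpha$, and by elementarity (with $Y_n,K,A\in\mathcal M_\alpha$) one reflects the statement ``there exist witnesses $a_i\in A$ such that $T(\{a_i\};o_i)$ misses $Y_n$ and contains a function taking the prescribed values on a basic set determined by $M_\alpha$'' to obtain witnesses $a_i'\in A\cap\mathcal M_\alpha$. Since $r_\alpha(a_i')=a_i'$, one gets $f(a_i')=f(r_\alpha(a_i'))\in o_i$, hence $f\in T(\{a_1'\},\dots,\{a_k'\};o_1,\dots,o_k)\subset\C(K)\setminus Y_n$, a contradiction. In short: replace your ``approximate $f\circ r_\alpha$ from inside $Y_n$'' plan by ``reflect the separating neighbourhood of $f\circ r_\alpha$ down into $\mathcal M_\alpha$ and evaluate $f$ there''; this is precisely the content of the paper's Proposition~\ref{pClosedSubspace}, and once you see it the elementarity formulation and the explicit inductive construction are interchangeable.
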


The system of projections as above is called ``simultaneous projectional resolution of the identity'' in \cite{valdivia}. In the present paper we generalize Theorem A using the notion of a skeleton.

Let us have a partially ordered set $(\Gamma,<)$. We say that it is \textit{up-directed}, if for any $s,t\in\Gamma$, there is $u\in\Gamma$ such that $u\geq s$, $u\geq t$. We say that $\Gamma$ is \textit{$\sigma$-complete}, if for every increasing sequence $(s_n)_{n\in\en}$ in $\Gamma$, $\sup_{n\in\en}s_n$ exists.
\begin{defin}A \textit{projectional skeleton} in a Banach space $X$ is a family of projections $\{P_\gamma\}_{\gamma\in\Gamma}$, indexed by an up-directed $\sigma$-complete partially ordered set $\Gamma$, such that
\begin{enumerate}[\upshape (i)]
	\item Each $P_s X$ is separable.
	\item $X = \bigcup_{s\in\Gamma}P_s X$.
	\item $s\leq t \Rightarrow P_s = P_s\circ P_t = P_t\circ P_s.$
	\item Given $s_1 < s_2 < \cdots$ in $\Gamma$ and $t = \sup_{n\in\en}s_n$, $P_t X = \ov{\bigcup_{n\in\en}P_{s_n}X}$.
\end{enumerate}
Given $r\geq 1$, we say that $\{P_s\}_{s\in\Gamma}$ is an \textit{r-projectional skeleton} if it is a projectional skeleton such that $\|P_s\|\leq r$ for every $s\in\Gamma$.

We say that $\{P_s\}_{s\in\Gamma}$ is a \textit{commutative projectional skeleton} if $P_s\circ P_t = P_t\circ P_s$ for any $s,t\in\Gamma$.
\end{defin}

\begin{remark}\label{remarkSkeleton}
Having an $r$-projecitonal skeleton $\{P_s\}_{s\in\Gamma}$, an increasing sequence of indices $s_0 < s_1 < \cdots$ in $\Gamma$ and $t = \sup_{n\in\en}s_n$, it is easy to verify that $P_t(x) = \lim_nP_{s_n}(x)$ for every $x\in X$; see \cite[Lemma 10]{kubis}. This statement holds even for an arbitrary projectional skeleton, not neccessary uniformly bounded, but this will not be needed it any further. Recall that due to \cite{kubis}, we may always assume that every projectional skeleton is an r-projectional skeleton for some $r\geq 1$ (just by passing to a suitable cofinal subset of $\Gamma$).
\end{remark}

In \cite{kubisSmall} there was introduced a class of compact spaces with a retractional skeleton and it was observed in \cite{kubisSmall}, \cite{kubis} and \cite{cuthCMUC} that those spaces are more general than Valdivia compact spaces, but they share a lot of properties with them.

\begin{defin}A \textit{retractional skeleton} in a compact space $K$ is a family of retractions $\mathfrak{s} = \{r_s\}_{s\in\Gamma}$, indexed by an up-directed $\sigma$-complete partially ordered set $\Gamma$, such that
\begin{enumerate}[\upshape (i)]
	\item $r_s[K]$ is metrizable for each $s\in\Gamma$.
	\item For every $x\in K$, $x = \lim_{s\in\Gamma}r_s(x)$.
	\item $s\leq t \Rightarrow r_s = r_s\circ r_t = r_t\circ r_s.$
	\item Given $s_1 < s_2 < \cdots$ in $\Gamma$ and $t = \sup_{n\in\en}s_n$, $r_t(x) = \lim_{n\to\infty}r_{s_n}(x)$ for every $x\in K$.
\end{enumerate}
We say that $\{r_s\}_{s\in\Gamma}$ is a \textit{commutative retractional skeleton} if $r_s\circ r_t = r_t\circ r_s$ for any $s,t\in\Gamma$.\\
We say that $D(\mathfrak{s}) = \bigcup_{s\in\Gamma}r_s[K]$ is the \textit{set induced by a retractional skeleton} in $K$.
\end{defin}

By $\R_0$ we denote the class of all compacta which have a retractional skeleton.

The class of Banach spaces with a projectional skeleton (resp. class of compact spaces with a retractional skeleton) is closely related to the concept of Plichko spaces (resp. Valdivia compacta). By {\cite[Theorem 27]{kubis}}, Plichko spaces are exactly spaces with a commutative projectional skeleton. By {\cite[Theorem 6.1]{kubisSmall}}, Valdivia compact spaces are exactly compact spaces with a commutative retractional skeleton.

The above mentioned generalization of the result from \cite{valdivia} is the following.

\begin{thm}\label{tMain}Let us have  $K\in\R_0$ with $D\subset K$ induced by a retractional skeleton in $K$ and countably many $\tau_p(D)$-closed subspaces $(Y_n)_{n\in\en}$ of $\C(K)$. Then there exists a $1$-projectional skeleton $\{P_s\}_{s\in\Gamma}$ in $\C(K)$ such that $P_s(Y_n)\subset Y_n$, $n\in\en$, $s\in\Gamma$.

In particular, for all $n\in\en$, $\{P_s\upharpoonright_{Y_n}\}_{s\in\Gamma}$ is $1$-projectional skeleton in $Y_n$.
\end{thm}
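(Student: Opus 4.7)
My strategy is to induce the desired $1$-projectional skeleton on $\C(K)$ directly from the retractional skeleton $\mathfrak{s}=\{r_t\}_{t\in\Gamma_0}$ on $K$ via composition operators, refined so that each $Y_n$ is preserved. Namely, for any $t\in\Gamma_0$, define $P_t\colon\C(K)\to\C(K)$ by $P_t(f):=f\circ r_t$. Since $r_t\circ r_t=r_t$ and $\|f\circ r_t\|_\infty\le\|f\|_\infty$, each $P_t$ is a linear projection of norm at most $1$, and its range is isometric to $\C(r_t[K])$, which is separable because $r_t[K]$ is metrizable. The compatibility axioms (iii) and (iv) for $\{P_t\}$ transfer from the corresponding retractional-skeleton axioms: $r_s=r_s\circ r_t=r_t\circ r_s$ gives $P_s=P_s\circ P_t=P_t\circ P_s$; and if $t=\sup_n s_n$ with $s_0<s_1<\cdots$, then pointwise convergence $r_{s_n}(x)\to r_t(x)$ combined with continuity of $f$ yields $P_{s_n}(f)\to P_t(f)$ pointwise, which together with uniform boundedness $\|P_{s_n}\|\le 1$ and Remark~\ref{remarkSkeleton} handles axiom (iv).

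The decisive point is the $\tau_p(D)$-continuity of each $P_t$: since $r_t[K]\subset D$, the map $r_t$ sends $D$ into $D$, and hence $f_\alpha\to f$ pointwise on $D$ forces $f_\alpha\circ r_t\to f\circ r_t$ pointwise on $D$. This is the key bridge between the retractional skeleton on $K$ and the $\tau_p(D)$-closedness of the $Y_n$.

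To arrange $P_s(Y_n)\subset Y_n$ together with the covering axiom $\C(K)=\bigcup_s P_s(\C(K))$, I would index the skeleton by countable elementary submodels, in the spirit of Kubi\'s's construction. Take a sufficiently large fragment $H(\theta)$ containing $K$, $\mathfrak{s}$, $D$, and $(Y_n)_{n\in\en}$, and let $\Gamma$ be the family of countable elementary submodels $M\prec H(\theta)$ containing this data, ordered by inclusion. For each such $M$, the countable set $M\cap\Gamma_0$ is up-directed, so its supremum $t_M$ exists in $\Gamma_0$; set $P_M:=P_{t_M}$. By elementarity, $Y_n\cap M$ is $\tau_p(D)$-dense in $Y_n$, and for $f\in Y_n\cap M$ one shows $P_M(f)\in Y_n$ by writing $P_M(f)$ as the $\tau_p(D)$-limit of $P_s(f)$ for $s\in M\cap\Gamma_0$ and arguing, via an elementary-submodel witness, that each such $P_s(f)$ lies in $Y_n$; the $\tau_p(D)$-continuity of $P_M$ together with the $\tau_p(D)$-closedness of $Y_n$ then extends the conclusion to all of $Y_n$.

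The main obstacle will be the precise execution of this elementary-submodel argument: verifying that $\{P_M\}_{M\in\Gamma}$ satisfies all projectional-skeleton axioms (in particular the covering axiom, which requires that every $f\in\C(K)$ be fixed by $P_M$ for some $M$), and that $Y_n\cap M$ is indeed $\tau_p(D)$-dense in $Y_n$. Once these are in place, the ``in particular'' clause is automatic: as $P_s(Y_n)\subset Y_n$, the restrictions $\{P_s\upharpoonright_{Y_n}\}_{s\in\Gamma}$ are projections on $Y_n$ that inherit all skeleton axioms directly from $\{P_s\}$ on $\C(K)$.
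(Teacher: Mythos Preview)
Your setup---defining $P_t(f)=f\circ r_t$ from the retractional skeleton and noting the $\tau_p(D)$-continuity of each $P_t$---is correct and matches the paper. The gap is in your argument that $P_{t_M}(Y_n)\subset Y_n$. You assert that for $f\in Y_n\cap M$ and $s\in M\cap\Gamma_0$ one has $P_s(f)\in Y_n$ ``via an elementary-submodel witness.'' This is simply false in general: there is no reason an individual $P_s$ preserves $Y_n$ merely because $s$ and $f$ lie in $M$. Elementarity reflects true statements; it does not create the inclusion $f\circ r_s\in Y_n$ when that inclusion fails in $H(\theta)$. Finding the special indices $s$ at which $P_s(Y_n)\subset Y_n$ is exactly the content of the theorem, so you cannot assume it holds for all $s\in M\cap\Gamma_0$. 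The auxiliary claim that $Y_n\cap M$ is $\tau_p(D)$-dense in $Y_n$ is likewise unjustified: for $f\in Y_n$ and $d_1,\ldots,d_k\in D$ lying outside $M$, elementarity gives you no approximant.

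The paper proceeds differently. It proves (Proposition~\ref{pClosedSubspace}) that for a single $\tau_p(D)$-closed $Y$ the set $\Gamma'=\{s\in\Gamma:P_s(Y)\subset Y\}$ is $\sigma$-closed (immediate from Remark~\ref{remarkSkeleton}) and \emph{unbounded} in $\Gamma$. Unboundedness is obtained by an explicit closing-off: given $s_1$, build $s_1<s_2<\cdots$ so that for every set $T(r_{s_n}^{-1}(U_1),\ldots,r_{s_n}^{-1}(U_k);o_1,\ldots,o_k)$ (with $U_j$ from a fixed countable basis of $r_{s_n}[K]$ and $o_j$ rational intervals) that can be refined to some $T(\{x_1\},\ldots,\{x_k\};o_1,\ldots,o_k)\subset\C(K)\setminus Y$ with $x_i\in D$, one records such witnesses and chooses $s_{n+1}$ so that $r_{s_{n+1}}[K]$ contains them all. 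For $t=\sup_n s_n$ one then argues by contradiction: if $P_t(f)\notin Y$ for some $f\in Y$, a separating $\tau_p(D)$-neighborhood can be pushed back to stage~$n$, and the recorded witnesses $x_i\in r_{s_{n+1}}[K]\subset r_t[K]$ force $f(x_i)=P_t(f)(x_i)\in o_i$, hence $f\notin Y$. Finally, intersecting the countably many unbounded $\sigma$-closed sets so obtained handles all $Y_n$ at once. An elementary-submodel proof \emph{can} be made to work, but only by having $M$ absorb exactly this witness-collection mechanism (so that the witnesses $x_i$ land in $r_{t_M}[K]$ and one contradicts $f\in Y_n$ rather than trying to show $P_s(f)\in Y_n$); the reflection step you wrote is not that argument.
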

The statement with PRI, instead of a projectional skeleton, follows immediately from the proof of \cite[Theorem 17.6]{kubisKniha}. Hence, this really is a generalization of Theorem A.

Moreover, we use the existence of a ``simultaneous skeleton'' to prove other statements concerning the structure of spaces with a projectional (resp. retractional) skeleton. We study a relationship between projectional and retractional skeletons. In particular, we give an answer to \cite[Question 1]{cuthCMUC}. We also study subspaces (resp. continuous images) of spaces with a projectional (resp. retractional) skeleton.

Using the above, we give the following characterization of Asplund spaces. A Banach space $X$ is Asplund if and only if the dual space has a 1-projectional skeleton after every renorming of $X$ if and only if the bidual unit ball has a retractional skeleton after every renorming of $X$. In particular, this gives an answer to {\cite[Question 1]{kalenda01}}. Let us just note that the answer has already been known to O. Kalenda before and it has been contained in one of his unpublished remarks.

The structure of the paper is as follows: first, we prove Theorem \ref{tMain}. Next, we use this result
to study the relationship between projectional and retractional skeletons. Then we characterize those subspaces (resp.
continuous images) of a space with a projectional (resp. retractional) skeleton, where a ``natural projectional
subskeleton'' exists. Next, we give a new characterization of Asplund spaces. Finally, we show some more applications of given
results.

\section{Preliminaries}

We denote by $\omega$ the set of all natural numbers (including $0$), by $\en$ the set $\omega\setminus\{0\}$.

All topological spaces are assumed to be Hausdorff. Let $T$ be a topological space. The closure of a set $A$ we denote by
$\ov{A}$. We say that $A\subset T$ is \textit{countably closed} if $\ov{C}\subset A$ for every countable $C\subset A$. A
topological space $T$ is a \textit{Fr\'echet-Urysohn space} if for every $A\subset T$ and every $x\in\ov{A}$ there is a sequence
$x_n\in A$ with $x_n\to x$. We say that $T$ is \textit{countably compact} if every countable open cover of $T$ has a finite
subcover. If $T$ is completely regular, we denote by $\beta T$ the Stone-\v{C}ech compactification of $T$.

Let $K$ be a compact space. By $\C(K)$ we denote the space of continuous
functions on $K$. Given a dense set $D\subset K$, we denote by $\tau_p(D)$ the topology of the pointwise convergence on $D$; i.e., the weakest topology on $C(K)$ such that $\C(K)\ni f\mapsto f(d)$ is continuous for every $d\in D$.
$P(K)$ stands for the space of probability measures with the $w^*$--topology (the $w^*$--topology is taken
from the representation of $P(K)$ as a compact subset of $(\C(K)^*,w^*)$).

We shall consider Banach spaces over the field of real numbers (but many results hold for complex spaces as well). If $X$ is a
Banach space and $A\subset X$, we denote by $\conv{A}$ the convex hull of $A$. We write $A^\bot = \{x^*\in X^*:\;(\forall a\in
A)\;x^*(a) = 0\}$. $B_X$ is the unit ball in $X$; i.e., the set
$\{x\in X:\; \|x\| \leq 1\}$. $X^*$ stands for the (continuous) dual space of $X$. For a set $A\subset X^*$ we denote by
$\ov{A}^{w^*}$ the $weak^*$ closure of $A$. Given a set $D\subset X^*$ we denote by $\sigma(X,D)$ the weakest topology on $X$
such that each functional from $D$ is continuous.

A set $D\subset X^*$ is \textit{r-norming} if
$$\|x\| \leq r. \sup\{|x^*(x)|:\;x^*\in D\cap B_{X^*}\}.$$
We say that a set $D\subset X^*$ is norming if it is $r$-norming for some $r\geq 1$.

Recall that a Banach space $X$ is called \textit{Plichko} (resp. 1-\textit{Plichko}) if there are a linearly dense set $M\subset X$ and a norming (resp. 1-norming) set $D\subset X^*$ such that for every $x^*\in D$ the set $\{m\in M:\;x^*(m)\neq 0\}$ is countable. 

\begin{defin}Let $\mathfrak{s} = \{P_s\}_{s\in\Gamma}$ be a projectional skeleton in a Banach space $X$ and let $D(\mathfrak{s}) = \bigcup_{s\in\Gamma}P^*_s[X^*]$. Then we say that \textit{$D(\mathfrak{s})$ is induced by a projectional skeleton}.\end{defin}

Some properties of a set induced by a retractional skeleton in $K\in\R_0$ are similar to the properties of a ``dense $\Sigma$-subset'' in a Valdivia compact $K$. Bellow we collect some of the most important statements. Those will be needed in what follows.

\begin{lemma}\label{lBasicSkeleton}Assume $D$ is induced by a retractional skeleton in $K$. Then:
\begin{enumerate}[\upshape (i)]
	\item $D$ is dense and countably closed in $K$.
	\item $K = \beta D$ and $D$ is a Fr\'echet-Urysohn space.
	\item If $F\subset K$ is closed and $F\cap D$ is dense in $F$, then $F\cap D$ is induced by a retractional skeleton in $F$.
	\item If $G\subset K$ is a $G_\delta$ set, then $G\cap D$ is dense in $G$. In particular, if $G\subset K$ is a closed $G_\delta$ set, then $G$ has a retractional skeleton.
	\item If $E\subset D$ is a countably closed and dense set in $K$, then $E = D$.
\end{enumerate}
\end{lemma}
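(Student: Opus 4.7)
My plan is to treat the five parts in order, each drawing on its predecessors. For (i), density is immediate from the axiom $r_s(x)\to x$ combined with $r_s(x)\in D$. For countable closedness, given a countable $C\subset D$, each $c\in C$ is fixed by some $r_{s_c}$; up-directedness and $\sigma$-completeness supply $t\in\Gamma$ dominating every $s_c$, and the compatibility $r_{s_c}=r_t\circ r_{s_c}$ forces $r_t(c)=c$ for all $c\in C$, so $\overline{C}\subset r_t[K]\subset D$ (using that $r_t[K]$, being the fixed-point set of a continuous self-map of a Hausdorff space, is closed). Part (ii) I would import from \cite{kubisSmall}: metrizability of each $r_s[K]$ together with the sequential supremum axiom makes $D$ Fr\'echet--Urysohn and $C^*$-embedded in $K$, forcing $K=\beta D$. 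Part (v) is then clean: $E$ is dense in $D$, and by (ii) $D$ is Fr\'echet--Urysohn, so any $x\in D$ is the limit of a sequence $(e_n)\subset E$; countable closedness of $E$ gives $x\in\overline{\{e_n\}}\subset E$, hence $D\subset E$.

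Part (iii) is the main technical obstacle. I would take as candidate skeleton $\{r_s\upharpoonright_F : s\in\Gamma_F\}$ with $\Gamma_F=\{s\in\Gamma : r_s[F]\subset F\}$; the four skeleton axioms transfer routinely from the ambient skeleton once $\Gamma_F$ is shown to be up-directed, $\sigma$-complete, and cofinal in $\Gamma$. Cofinality is where the effort lies: starting from any $s_0\in\Gamma$, I would iteratively enlarge the index in $\omega$ steps, at each stage absorbing countably many approximations drawn from $F\cap D$ so that more and more of $F$ is mapped back into $F$, and then pass to the $\sigma$-complete supremum; density of $F\cap D$ in $F$ plus countable closedness of $F\cap D$ (inherited from (i)) ensure that the resulting retraction indeed satisfies $r_t[F]\subset F$.

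For (iv), given $x\in G=\bigcap_n U_n$ and open $V\ni x$, I would use normality of $K$ to pick closed $C_n$ with $x\in\operatorname{int}C_n\subset C_n\subset U_n$, and similarly $C_0\subset V$. Since $r_s(x)\to x$, I inductively choose $s_1<s_2<\cdots$ in $\Gamma$ with $r_{s_n}(x)\in\operatorname{int}C_0\cap\cdots\cap\operatorname{int}C_n$; taking $t=\sup_n s_n$, the limit $r_t(x)=\lim_n r_{s_n}(x)$ lies in each closed $C_k$, hence in $V\cap G\cap D$. The ``in particular'' conclusion then follows by applying (iii) to the closed $G_\delta$ set. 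The hardest part of the lemma is (iii); everything else flows from the sequential framework of (i) and (ii).
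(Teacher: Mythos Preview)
Your proposal is correct. For (v) your argument matches the paper's verbatim; for (i)--(iv) the paper simply cites \cite{kubis} and \cite{cuthCMUC}, and the sketches you give reproduce the standard arguments found there (in particular, your $\omega$-step refinement establishing cofinality of $\Gamma_F=\{s:r_s[F]\subset F\}$ in (iii) is exactly the method of \cite[Lemma~3.5]{cuthCMUC}, and your sequential argument for (iv) is the usual one).
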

\begin{proof}The statements (i) and (ii) are proved in \cite{kubis}, (iii) and (iv) are proved in \cite{cuthCMUC}. In order to prove (v), we follow the lines of \cite[Lemma 1.7]{kalendaSurvey}. Fix $x\in D$. Then $x\in \ov{E}$ and using the fact that $D$ is Fr\'echet-Urysohn, there exists a sequence $x_n\in E$ with $x_n\to x$. As $E$ is countably closed, $x\in E$.
\end{proof}

For other statements concerning similarities between Valdivia compacta and spaces with a retractional skeleton we refer to \cite{cuthCMUC} where more details may be found.

The last statement of this section is the following lemma which we will need later.

\begin{lemma}\label{lFrechetQuotient}Let $\varphi:D\to B$ be a continuous mapping, where $D$ is countably compact space and $B$ is Fr\'echet-Urysohn. Then $\varphi$ is closed.\end{lemma}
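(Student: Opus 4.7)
The plan is to show that $\varphi$ sends closed sets to closed sets by using the Fr\'echet-Urysohn property of $B$ to reduce closedness to sequential closedness, and then exploiting countable compactness of $D$ to produce a preimage point.

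Fix a closed set $F\subset D$; I want to prove $\varphi(F)$ is closed in $B$. Since $B$ is Fr\'echet-Urysohn, every point of $\ov{\varphi(F)}$ is the limit of a sequence from $\varphi(F)$, so it suffices to check that $\varphi(F)$ is sequentially closed. Take therefore $b_n\in\varphi(F)$ with $b_n\to b\in B$, and pick witnesses $d_n\in F$ with $\varphi(d_n)=b_n$.

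Now I use the standard characterization of countable compactness: every sequence in a countably compact space has a cluster point. Applied to $(d_n)\subset D$, this yields $d\in D$ such that every neighbourhood of $d$ contains $d_n$ for infinitely many $n$. Since $F$ is closed and each $d_n\in F$, we get $d\in F$. Continuity of $\varphi$ then transfers clustering: for any neighbourhood $V$ of $\varphi(d)$, the set $\varphi^{-1}(V)$ is a neighbourhood of $d$, hence contains $d_n$ for infinitely many $n$, i.e.\ $b_n\in V$ for infinitely many $n$. Thus $\varphi(d)$ is a cluster point of $(b_n)$.

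To conclude, I invoke Hausdorffness of $B$ (assumed throughout the paper): a convergent sequence in a Hausdorff space has its limit as its unique cluster point, so $\varphi(d)=b$. Hence $b\in\varphi(F)$, finishing the proof.

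The only real subtlety is that one must be careful not to try to extract a convergent subsequence from $(d_n)$ (which would require sequential compactness, not just countable compactness); instead one must work with cluster points and then use continuity plus Hausdorffness to pin down $\varphi(d)=b$. Everything else is a routine unwinding of the definitions.
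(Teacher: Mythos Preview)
Your proof is correct and follows essentially the same strategy as the paper's: use the Fr\'echet--Urysohn property of $B$ to reduce closedness to sequential closedness, then exploit countable compactness to produce a cluster point in the image, and finally use Hausdorffness to identify this cluster point with the given limit. The only minor difference is that the paper applies countable compactness directly to the image $\varphi(F)$ (noting that continuous images of countably compact spaces are countably compact, so a subnet of $(b_n)$ converges in $\varphi(F)$), whereas you lift to preimages $d_n\in F$ and find the cluster point upstairs; both variants lead to the same conclusion with the same ingredients.
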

\begin{proof}Let $A\subset D$ be a closed set. Then $A$ is countably compact; hence, $\varphi(A)$ is countably compact. Fix $x\in\ov{\varphi(A)}$. As $B$ is Fr\'echet-Urysohn, there exists a sequence $(x_n)_{n=1}^\infty\subset \varphi(A)$ with $x_n\to x$. Since $\varphi(A)$ is countably compact, there exists a subnet $(x_\nu)$ of the sequence $(x_n)_{n=1}^\infty$ such that $x_\nu\to y\in \varphi(A)$. It follows that $y = x\in \varphi(A)$.
\end{proof}

\section{Simultaneous projectional skeletons}

In this section we prove Theorem \ref{tMain}.

Assume $D$ is induced by a retractional skeleton $\{r_s\}_{s\in\Gamma}$ in $K$. Let us define, for $s\in\Gamma$, the projection $P_s$ by $P_s(f) = f\circ r_s$, $f\in\C(K)$. It is known that $\{P_s\}_{s\in\Gamma}$ is a 1-projectional skeleton in $\C(K)$. Now, let us fix a set $\Gamma'\subset\Gamma$. We would like to know that $\{P_s\}_{s\in\Gamma'}$ is still a 1-projectional skeleton. It is easily seen that a sufficient condition for $\Gamma'$ is to be unbounded and $\sigma$-closed in $\Gamma$ in the sense of the following definition.

\begin{defin}Let $\Gamma$ be an up-directed $\sigma$-complete partially ordered set and $\Gamma'\subset\Gamma$. We say that $\Gamma'$ is
\begin{enumerate}[\upshape (i)]
 \item \textit{unbounded} (in $\Gamma$), if for every $s\in\Gamma$ there exists $t\in\Gamma'$ such that $s\leq t$;
 \item \textit{$\sigma$-closed} (in $\Gamma$), if for every increasing sequence $\{s_n\}_{n\in\en}$ in $\Gamma'$, $\sup s_n\in\Gamma'$.
\end{enumerate}
\end{defin}

Next, it is easy to check that whenever $\{\Gamma_n\}_{n\in\en}$ is a sequence of unbounded and $\sigma$-closed sets in $\Gamma$, then $\bigcap_{n\in\en}\Gamma_n$ is again unbounded and $\sigma$-closed in $\Gamma$.

Let us fix a subspace $Y$ of $\C(K)$. In order to see that there is a ``simultaneous projectional skeleton for $\C(K)$ and $Y$'', it is enough to find an unbounded and $\sigma$-closed set $\Gamma'\subset\Gamma$ such that $P_s(Y)\subset Y$ for every $s\in\Gamma'$. Then obviously $\{P_s\!\!\upharpoonright_Y\}_{s\in\Gamma'}$ is 1-projectional skeleton in $Y$ and $\{P_s\}_{s\in\Gamma'}$ is 1-projectional skeleton in $\C(K)$.

If we were able to find such an unbounded and $\sigma$-closed set $\Gamma'\subset\Gamma$ for every $\tau_p(D)$-closed subspace of $\C(K)$, then Theorem \ref{tMain} would easily follow using the fact that we may intersect countably many unbounded and $\sigma$-closed sets as mentioned above.

This is done in the following proposition. The proof is quite technical and its idea comes from \cite{valdivia}, where a similar statement concerning PRI is proved. In the proof we do not need $Y$ to be a subspace, so we formulate it in a more general way.

\begin{proposition}\label{pClosedSubspace}Let $K$ be a compact space with a retractional skeleton $\mathfrak{s} = \{r_s\}_{s\in\Gamma}$ and put $D = D(\mathfrak{s})$. Let $Y$ be a $\tau_p(D)$-closed subset of $\C(K)$. Then there exists an unbounded and $\sigma$-closed set $\Gamma'\subset\Gamma$ such that, for every $t\in\Gamma'$ and $f\in Y$ we have $f\circ r_t\in Y$.\end{proposition}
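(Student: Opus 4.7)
The plan is to define
\[
\Gamma' := \{t \in \Gamma : f \circ r_t \in Y \text{ for every } f \in Y\}
\]
and verify that $\Gamma'$ is both $\sigma$-closed and unbounded in $\Gamma$. The $\sigma$-closedness is direct: if $(t_n)$ is an increasing sequence in $\Gamma'$ with $t = \sup t_n$, then for each $f \in Y$, axiom (iv) of the retractional skeleton together with continuity of $f$ yields $f \circ r_{t_n} \to f \circ r_t$ pointwise on $K$, hence in $\tau_p(D)$. Each $f \circ r_{t_n}$ lies in $Y$ and $Y$ is $\tau_p(D)$-closed, so $f \circ r_t \in Y$, giving $t \in \Gamma'$.

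For unboundedness, fix $s_0 \in \Gamma$. The plan is to construct an increasing sequence $s_0 \le s_1 \le \cdots$ in $\Gamma$ whose supremum $t$ lies in $\Gamma'$, via a countable closing-off patterned on the proof of \cite[Theorem 1]{valdivia}. At stage $n$, after $s_n$ is defined, I would extract countable auxiliary data: a countable dense subset $E_n$ of the metrizable compactum $r_{s_n}[K]$, a countable norm-dense subset $B_n$ of the separable Banach space $P_{s_n}[\C(K)]$, and a countable collection $H_n \subset Y$ of ``witnesses'' chosen to record, for each triple $(b, F, \eps)$ with $b \in \bigcup_{k \le n} B_k$, $F \subset \bigcup_{k \le n} E_k$ finite, and $\eps > 0$ rational, one element $g \in Y$ satisfying $|g(d) - b(d)| < \eps$ for all $d \in F$ whenever such $g$ exists. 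Then I would choose $s_{n+1} \ge s_n$ with $\bigcup_{k \le n} H_k \subset P_{s_{n+1}}[\C(K)]$, which is possible because $\Gamma$ is $\sigma$-complete and up-directed and $\C(K) = \bigcup_{s \in \Gamma} P_s[\C(K)]$, while only countably many conditions must be met.

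To verify $t \in \Gamma'$, one shows that every $f \in Y$ has $f \circ r_t$ $\tau_p(D)$-approximable by elements of $\bigcup_n H_n \subset Y$, so that $f \circ r_t \in Y$ by $\tau_p(D)$-closedness. The key point is that $(f \circ r_t)(d) = f(r_t(d))$ depends on $f$ only through its restriction to $r_t[K] = \ov{\bigcup_n r_{s_n}[K]}$, a metrizable compact with $\bigcup_n E_n$ dense; approximating the finitely many values $f(r_t(d))$ by evaluations at nearby points of $\bigcup_n E_n$, and in turn by a suitable $b \in \bigcup_n B_n$, brings the task within the scope of the recorded witnesses. The main obstacle is executing this verification uniformly over all $f \in Y$, including when $Y$ is non-separable: it requires careful bookkeeping so that the countable parameters $(b, F, \eps)$ tracked during the construction are rich enough to accommodate arbitrary future approximation needs, and relies essentially on the Fr\'echet-Urysohn property of $D$ and on the Tietze-style extendability of compatible data from $r_t[K]$ back to elements of $Y$.
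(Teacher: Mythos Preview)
Your definition of $\Gamma'$ and the $\sigma$-closedness argument are correct and match the paper. The gap is in the unboundedness step, specifically in the verification that $t=\sup_n s_n$ lies in $\Gamma'$.

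You want to show that for every $f\in Y$ the function $f\circ r_t$ lies in the $\tau_p(D)$-closure of $\bigcup_n H_n$. Concretely, given $d_1,\dots,d_m\in D$ and $\eps>0$, you must produce $g\in\bigcup_n H_n$ with $|g(d_i)-f(r_t(d_i))|<\eps$ for each $i$. Since any such $g$ satisfies $g=g\circ r_t$, this amounts to $|g(x_i)-f(x_i)|<\eps$ for $x_i:=r_t(d_i)\in r_t[K]$. Your bookkeeping, however, only constrains $g$ at points of $\bigcup_n E_n$: you can indeed find $e_i\in\bigcup_n E_n$ with $|f(e_i)-f(x_i)|$ small, then $b\in\bigcup_n B_n$ with $|b(e_i)-f(e_i)|$ small, and then (because $f$ itself witnesses existence) a recorded $g\in H_{n'}$ with $|g(e_i)-b(e_i)|$ small. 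But to pass from $g(e_i)$ to $g(x_i)$ you would need $|g(e_i)-g(x_i)|$ small, and the points $e_i$ were chosen using the modulus of continuity of $f$, not of $g$. Nothing in your construction bounds the modulus of continuity (or even the norm) of the recorded witnesses, so this step does not go through. Neither the Fr\'echet--Urysohn property of $D$ nor Tietze extension helps here: Tietze produces elements of $\C(K)$, not of $Y$.

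The paper avoids this by recording witness \emph{points} in $D$ for the $\tau_p(D)$-open complement $\C(K)\setminus Y$, rather than witness functions in $Y$. At stage $n$ one fixes a countable basis $\U_n$ of $r_{s_n}[K]$ and, for every finite tuple $U_{j_1},\dots,U_{j_k}\in\U_n$ and rational intervals $o_1,\dots,o_k$ such that the set $\{h:h(r_{s_n}^{-1}(U_{j_l}))\subset o_l\text{ for all }l\}$ misses $Y$, one picks points $x_1,\dots,x_k\in D$ with $\{h:h(x_l)\in o_l\}\subset\C(K)\setminus Y$; then $s_{n+1}$ is chosen so that $r_{s_{n+1}}[K]$ contains all these points. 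If $f\in Y$ but $f\circ r_t\notin Y$, one finds such a tuple containing $f\circ r_t$, and the corresponding recorded points $x_l$ lie in $r_{s_{n+1}}[K]\subset r_t[K]$, so that $f(x_l)=(f\circ r_t)(x_l)\in o_l$; hence $f$ itself lands in the forbidden neighbourhood, contradicting $f\in Y$. The crucial feature---absent from your scheme---is that evaluation of $f$ and of $f\circ r_t$ coincide at the recorded points, so no control of an auxiliary function's continuity is needed.
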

\begin{proof}In the proof we denote by $\O$ the set of all the rational open intervals in $\er$. If $K_1,\ldots,K_n$ are subsets of $K$ and $o_1,\ldots,o_n\in\O$, we put
$$T(K_1,K_2,\ldots,K_n;o_1,o_2,\ldots,o_n) = \{f\in\C(K):\;f(K_i)\subset o_i\text{ for any }i=1,2,\ldots,n\}.$$
Let us define, for every $s\in\Gamma$, the projection $P_s:\C(K)\to\C(K)$ by $P_s(f) = f\circ r_s$, $f\in\C(K)$. By {\cite[Proposition
28]{kubis}}, $\{P_s\}_{s\in\Gamma}$ is a 1-projectional skeleton in $\C(K)$. Put 
$$\Gamma' = \{s\in\Gamma:\;P_s(Y)\subset Y\}.$$
Using Remark \ref{remarkSkeleton}, it is easy to verify that $\Gamma'$ is $\sigma$-closed set. In order to show that it is unbounded, let us fix some $s\in\Gamma$ and put $s_1 = s$. We inductively define increasing sequences $(s_n)_{n\in\en}$ in $\Gamma$ and $(\U_n)_{n\in\en}$ in the following way.

Whenever $s_n\in\Gamma$ is given, let $\U_n$ be a countable basis of the topology on $r_{s_n}[K]$. For all $k\in\en$,
$U_{j_1},\ldots,U_{j_k}\in\U_n$, $o_{m_1},\ldots,o_{m_k}\in\O$ we fix, if it exists, a set $\{x_1,\ldots,x_k\}\subset D$ such that
$$T(r_{s_n}^{-1}(U_{j_1}),r_{s_n}^{-1}(U_{j_2}),\ldots,r_{s_n}^{-1}(U_{j_k});o_{m_1},\ldots,o_{m_k})\;\subset\; T(\{x_1\},\{x_2\},\ldots,\{x_k\};o_{m_1},\ldots,o_{m_k}),$$
and that the latter set is a subset of $\C(K)\setminus Y$.\\
Now, we find $s_{n+1} > s_n$ such that $r_{s_{n+1}}[K]$ contains all the points $\{x_1,\ldots,x_k\}$ corresponding to all
$$k\in\en,\quad U_{j_1},\ldots,U_{j_k}\in\U_n,\quad o_{m_1},\ldots,o_{m_k}\in\O.$$

We define $t = \sup s_n$. Now, it remains to show that $P_t(Y)\subset Y$. Arguing by contradiction, let us assume that there exists an $f\in Y$ such that $P_t(f)\notin Y$. Then there are $k\in\en$, $z_1,\ldots,z_k\in D$ and $o_1,\ldots,o_k\in \O$ with
$$P_t(f)\in T(\{z_1\},\{z_2\},\ldots,\{z_k\};o_1,o_2,\ldots,o_k)\subset \C(K)\setminus Y.$$
Now, fix $\eps > 0$ such that, for every $i\in\{1,\ldots,k\}$,
$$[P_t(f)(z_i)-3\eps,P_t(f)(z_i)+3\eps]\subset o_i.$$
Using the fact that $\{P_s\}_{s\in\Gamma}$ is a 1-projectional skeleton in $\C(K)$ and Remark \ref{remarkSkeleton}, we find $n\in\en$ with $\|P_{s_n}(f) - P_t(f)\| < \eps$. By the continuity of $P_{s_n}(f)\!\!\upharpoonright_{r_{s_n}[K]} = f\!\!\upharpoonright_{r_{s_n}[K]}\in\C(r_{s_n}[K])$, for every $i\in\{1,\ldots,k\}$, there is $U_i\in\U_n$ with $r_{s_n}(z_i)\in U_i$ and 
$$f(U_i)\subset(P_{s_n}(f)(z_i)-\eps,P_{s_n}(f)(z_i)+\eps).$$
Thus, for every $x\in r_{s_n}^{-1}(U_i)$,
\begin{align*}|P_t(f)(x) - P_t(f)(z_i)| \leq & |P_t(f)(x) - P_{s_n}(f)(x)| + |P_{s_n}(f)(x) - P_{s_n}(f)(z_i)|  + \\ & |P_{s_n}(f)(z_i) - P_t(f)(z_i)| \leq 3\eps.\end{align*}
Hence,
\begin{align*}P_t(f)\in T(r_{s_n}^{-1}(U_1),\ldots,r_{s_n}^{-1}(U_k);o_1,\ldots,o_k) & \subset T(\{z_1\},\ldots,\{z_k\};o_1,\ldots,o_k)\\ & \subset \C(K)\setminus Y.\end{align*}
By the construction of the sequence $s_n$, there exists $\{x_1,\ldots,x_k\}\subset r_{s_{n+1}}[K]$ such that
\begin{align*}T(r_{s_n}^{-1}(U_1),r_{s_n}^{-1}(U_2),\ldots,r_{s_n}^{-1}(U_k);o_1,\ldots,o_k) & \subset 
T(\{x_1\},\{x_2\},\ldots,\{x_k\};o_1,\ldots,o_k)\\ & \subset \C(K)\setminus Y.\end{align*}
Consequently,
$$f(x_i) = P_{s_{n+1}}(f)(x_i) = P_t(f)(x_i)\in o_i,\quad i = 1,\ldots,k$$
and
$$f\in T(\{x_1\},\{x_2\},\ldots,\{x_k\};o_1,\ldots,o_k) \subset \C(K)\setminus Y,$$
which is a contradiction with $f\in Y$.
\end{proof}

Let recall that Theorem \ref{tMain} easily follows from Proposition \ref{pClosedSubspace}, as mentioned above. Moreover, we easily obtain the following more precise and more technical statement.

\begin{cor}\label{cSimultaneousSkeleton}Assume $D$ is induced by a retractional skeleton $\{r_s\}_{s\in\Gamma}$ in $K$. Let
$\{P_s\}_{s\in\Gamma}$ be the $1$-projectional skeleton in $\C(K)$ induced by $\{r_s\}_{s\in\Gamma}$; i.e., $P_s(f) = f\circ r_s$,
$s\in\Gamma$, $f\in\C(K)$.

Let $(F_n)_{n=1}^\infty$ be a sequence of closed subset in $K$ such that $F_n\cap D$ is dense in $F_n$ for all $n\in\en$. Let $(Y_n)_{n=1}^\infty$ be a sequence of $\tau_p(D)$-closed subsets of $\C(K)$. Then there is an up-directed, unbounded and $\sigma$-closed set $\Gamma'\subset \Gamma$ such that, for all $n\in\en$, $r_s[F_n]\subset F_n$ and $P_s[Y_n]\subset Y_n$.

In particular, for every $n\in\en$, $\{r_s\!\!\upharpoonright_{F_n}\}_{s\in\Gamma'}$ is a retractional skeleton in $F_n$ and $\{P_s\!\!\upharpoonright_{Y_n}\}_{s\in\Gamma'}$ is a $1$-projectional skeleton in $Y_n$ if $Y_n$ is a subspace.
\end{cor}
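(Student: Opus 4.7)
The plan is to reduce each condition ``$r_s[F_n]\subset F_n$'' to a condition of the form ``$P_s[Y'_n]\subset Y'_n$'' for a suitable $\tau_p(D)$-closed subspace $Y'_n\subset\C(K)$, and then invoke Proposition \ref{pClosedSubspace} countably many times, intersecting the resulting sets. Concretely, I would set
$$Y'_n = \{f\in\C(K):\;f\!\!\upharpoonright_{F_n}\equiv 0\}.$$
My first claim is that $P_s[Y'_n]\subset Y'_n$ if and only if $r_s[F_n]\subset F_n$. The ``if'' direction is immediate from the definition $P_s(f)=f\circ r_s$. For the ``only if'' direction, suppose $r_s(x)\notin F_n$ for some $x\in F_n$; by Urysohn's lemma, since $F_n$ is closed, there is $f\in\C(K)$ with $f\!\!\upharpoonright_{F_n}\equiv 0$ and $f(r_s(x))\neq 0$, so $f\in Y'_n$ but $P_s(f)(x)\neq 0$, contradicting $P_s(f)\in Y'_n$. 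My second claim is that $Y'_n$ is $\tau_p(D)$-closed: since $F_n\cap D$ is dense in $F_n$, continuity of $f\in\C(K)$ yields
$$Y'_n = \bigcap_{x\in F_n\cap D}\{f\in\C(K):\;f(x)=0\},$$
which is an intersection of $\tau_p(D)$-closed half-spaces.

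With this reduction in hand, I apply Proposition \ref{pClosedSubspace} to each of the countably many $\tau_p(D)$-closed sets $Y_n$ and $Y'_n$, obtaining countably many unbounded, $\sigma$-closed subsets of $\Gamma$. As explicitly noted in the paragraph preceding Proposition \ref{pClosedSubspace}, countable intersections of such subsets remain unbounded and $\sigma$-closed, and I let $\Gamma'$ be this intersection. Up-directedness of $\Gamma'$ is a free consequence: given $s,t\in\Gamma'$, up-directedness of $\Gamma$ produces $u\in\Gamma$ above both, and unboundedness of $\Gamma'$ in $\Gamma$ then yields $v\in\Gamma'$ with $v\geq u$.

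For the ``In particular'' clause, once $r_s[F_n]\subset F_n$ and $P_s[Y_n]\subset Y_n$ hold for all $s\in\Gamma'$ and all $n$, the restricted families inherit axioms (ii)--(iv) of a retractional (resp.\ projectional) skeleton immediately from $\{r_s\}_{s\in\Gamma}$ and $\{P_s\}_{s\in\Gamma}$. Axiom (i) for $\{r_s\!\!\upharpoonright_{F_n}\}$ holds because $r_s[F_n]$ is a closed (hence compact) subset of the metrizable compactum $r_s[K]$; axiom (i) for $\{P_s\!\!\upharpoonright_{Y_n}\}$ holds because $P_s[Y_n]\subset P_s[\C(K)]$, which is separable.

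I do not expect a serious obstacle: the conceptual step is recognizing that the ``$r_s$-invariance of $F_n$'' condition can be encoded as a ``$P_s$-invariance of $Y'_n$'' condition via Urysohn's lemma, after which the corollary is essentially a bookkeeping repetition of Proposition \ref{pClosedSubspace}. The only slight subtlety worth double-checking is that $Y'_n$ is genuinely $\tau_p(D)$-closed, which is exactly where the hypothesis that $F_n\cap D$ is dense in $F_n$ is used.
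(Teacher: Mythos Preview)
Your proof is correct and takes a slightly different route from the paper. The paper handles the two families separately: for the $Y_n$'s it invokes Proposition~\ref{pClosedSubspace}, while for the $F_n$'s it cites the proof of \cite[Lemma~3.5]{cuthCMUC} to obtain, for each $n$, an unbounded $\sigma$-closed $\Gamma_n\subset\Gamma$ with $r_s[F_n]\subset F_n$ for $s\in\Gamma_n$; it then intersects. Your approach instead reduces the $F_n$-condition to an instance of the $Y_n$-condition by introducing the ideal $Y'_n=\{f\in\C(K):f\!\!\upharpoonright_{F_n}\equiv 0\}$, observing via Urysohn's lemma that $P_s[Y'_n]\subset Y'_n$ is equivalent to $r_s[F_n]\subset F_n$, and checking that $Y'_n$ is $\tau_p(D)$-closed precisely because $F_n\cap D$ is dense in $F_n$. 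This buys self-containment: you use only Proposition~\ref{pClosedSubspace} and avoid the external reference. The paper's version is terser but less transparent about how the density hypothesis on $F_n\cap D$ enters. One cosmetic point: the sets $\{f:f(x)=0\}$ are $\tau_p(D)$-closed hyperplanes rather than half-spaces, but this does not affect the argument.
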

\begin{proof}Recall, that the intersection of countably many unbounded and $\sigma$-closed sets in $\Gamma$ is again an unbounded and $\sigma$-closed set in $\Gamma$. Thus, it is enough to use Proposition \ref{pClosedSubspace} and the proof of {\cite[Lemma 3.5]{cuthCMUC}} to construct a sequence of unbounded and $\sigma$-closed sets $\{\Gamma_n\}_{n\in\en}$ such that, for every $s\in\Gamma_n$, $r_s[F_n]\subset F_n$ and $P_s[Y_n]\subset Y_n$.
\end{proof}

\section{Consequences of the existence of a simultaneous projectional skeleton}

We use the existence of a ``simultaneous projectional skeleton'' to obtain certain new results concerning the structure of
spaces with a projectional (resp. retractional) skeleton. Those are similar results to the ones from \cite{kalendaSurvey}, concerning spaces with a commutative projectional (resp. retractional) skeleton; i.e., Plichko spaces and Valdivia compacta. Let us remark that Theorem \ref{tAnswerCuth} gives an answer to {\cite[Question 1]{cuthCMUC}}.

The following two theorems give the relationship between 1-projectional and retractional skeletons.

\begin{thm}\label{tAnswerCuth}Let $K$ be a compact space. Then the following conditions are equivalent:
\begin{enumerate}[\upshape (i)]
	\item $\C(K)$ has a $1$-projectional skeleton.
	\item There is a convex symmetric set induced by a retractional skeleton in $(B_{\C(K)^*},w^*)$.
	\item There is a convex set induced by a retractional skeleton in $(B_{\C(K)^*},w^*)$.
	\item There is a convex set induced by a retractional skeleton in $P(K)$.
\end{enumerate}\end{thm}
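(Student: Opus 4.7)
My plan is to establish the four equivalences by running the cycle $(i)\Rightarrow(ii)\Rightarrow(iii)\Rightarrow(iv)\Rightarrow(i)$. The step $(ii)\Rightarrow(iii)$ is immediate (just forget symmetry). The other three implications require genuine argument, and I expect $(iii)\Rightarrow(iv)$ to be the main obstacle, as discussed in the last paragraph.

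For $(i)\Rightarrow(ii)$ I use the standard dual construction. Given a $1$-projectional skeleton $\{P_s\}_{s\in\Gamma}$ in $\C(K)$, set $r_s := P_s^*\!\!\upharpoonright_{B_{\C(K)^*}}$. Because $\|P_s^*\|=\|P_s\|\leq 1$, these are retractions on the compact space $(B_{\C(K)^*},w^*)$, and it is known (cf.\ \cite{kubis}) that they form a retractional skeleton. Using condition (iii) of the projectional skeleton to find, for any $x^*\in P_s^*[\C(K)^*]$ and $y^*\in P_t^*[\C(K)^*]$, a common upper bound $u\geq s,t$ that fixes both (since $P_u^*\circ P_s^*=P_s^*$ and similarly for $t$), one checks that $\bigcup_s P_s^*[\C(K)^*]$ is a linear subspace of $\C(K)^*$. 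Hence the induced set $\bigl(\bigcup_s P_s^*[\C(K)^*]\bigr)\cap B_{\C(K)^*}$ is convex and symmetric.

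For $(iv)\Rightarrow(i)$ I exploit the isometric identification $\C(K)\cong A(P(K))$, where $A(P(K))$ is the Banach space of continuous affine functions on $P(K)$, realized by $f\mapsto\hat f$ with $\hat f(\mu)=\mu(f)$. The given retractional skeleton $\{r_s\}_{s\in\Gamma}$ on $P(K)$ induces, via $Q_s(F):=F\circ r_s$, a $1$-projectional skeleton on $\C(P(K))$. The key observation is that $A(P(K))$ is $\tau_p(D)$-closed in $\C(P(K))$: if $F_n\in A(P(K))$ converges to $F\in\C(P(K))$ in $\tau_p(D)$, then for $\mu,\nu\in D$ and $\lambda\in[0,1]$ convexity of $D$ gives $\lambda\mu+(1-\lambda)\nu\in D$, so the affinity identity passes to the limit and $F$ is affine on $D$; continuity of $F$ on $P(K)$ together with density of $D$ extends this to affinity on all of $P(K)$. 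Proposition~\ref{pClosedSubspace} applied with $Y=A(P(K))$ then produces an unbounded $\sigma$-closed $\Gamma'\subset\Gamma$ with $Q_s[A(P(K))]\subset A(P(K))$ for $s\in\Gamma'$, and $\{Q_s\!\!\upharpoonright_{A(P(K))}\}_{s\in\Gamma'}$ transports via the isometric isomorphism to the desired $1$-projectional skeleton on $\C(K)$.

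For $(iii)\Rightarrow(iv)$ I apply Corollary~\ref{cSimultaneousSkeleton} to the given retractional skeleton $\{r_s\}_{s\in\Gamma}$ on $B_{\C(K)^*}$ with the closed set $F_1:=P(K)$. Granted $D\cap P(K)$ is dense in $P(K)$, this yields an up-directed, unbounded, $\sigma$-closed $\Gamma'\subset\Gamma$ such that $r_s[P(K)]\subset P(K)$ for every $s\in\Gamma'$, and $\{r_s\!\!\upharpoonright_{P(K)}\}_{s\in\Gamma'}$ is a retractional skeleton on $P(K)$. Its induced set $D_0:=\bigcup_{s\in\Gamma'}r_s[P(K)]$ equals $D\cap P(K)$: the inclusion $D_0\subset D\cap P(K)$ is clear, and for $z\in D\cap P(K)$ pick $u_0\in\Gamma$ with $r_{u_0}(z)=z$ and $u\in\Gamma'$ with $u\geq u_0$; then condition (iii) of the skeleton gives $r_u(z)=r_u(r_{u_0}(z))=r_{u_0}(z)=z\in r_u[P(K)]\subset D_0$. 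Hence $D_0$ is convex as the intersection of two convex sets. The obstacle I anticipate is exactly the density claim: since $P(K)$ is a proper face of $B_{\C(K)^*}$ with empty interior, $w^*$-density of $D$ in the ball does not automatically descend to $P(K)$. The plan is to exploit convexity of $D$, its countable closedness, and the Fréchet--Urysohn structure from Lemma~\ref{lBasicSkeleton}(ii): for $\mu\in P(K)$, first produce $\nu_n\in D$ with $\nu_n\to\mu$ in $w^*$, then use convex combinations with a suitably chosen element of $D\cap P(K)$ together with a renormalization argument to push $\nu_n$ into $P(K)$ while preserving the approximation.
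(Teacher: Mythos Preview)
Your arguments for $(i)\Rightarrow(ii)$, $(ii)\Rightarrow(iii)$, and $(iv)\Rightarrow(i)$ are essentially those of the paper (the paper cites \cite{cuthCMUC} for the first and invokes Theorem~\ref{tMain} rather than Proposition~\ref{pClosedSubspace} directly for the last, but the content is the same).

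The discrepancy is in $(iii)\Rightarrow(iv)$, and the ``obstacle'' you anticipate is not one. Observe that
\[
P(K)=\{\mu\in B_{\C(K)^*}:\mu(1_K)=1\}=\bigcap_{n\in\en}\{\mu\in B_{\C(K)^*}:\mu(1_K)>1-\tfrac{1}{n}\},
\]
so $P(K)$ is a closed $G_\delta$ subset of $(B_{\C(K)^*},w^*)$. Lemma~\ref{lBasicSkeleton}(iv) then gives immediately that $D\cap P(K)$ is dense in $P(K)$, and Lemma~\ref{lBasicSkeleton}(iii) says that $D\cap P(K)$ is itself induced by a retractional skeleton in $P(K)$. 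Since $D\cap P(K)$ is the intersection of two convex sets, it is convex, and $(iv)$ follows. This is exactly how the paper proceeds: one line citing Lemma~\ref{lBasicSkeleton}.

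Your route via Corollary~\ref{cSimultaneousSkeleton} would also work once the density is established (and your verification that the resulting induced set equals $D\cap P(K)$ is correct), but it is unnecessary machinery here. More importantly, the approximation scheme you sketch for density---pushing $\nu_n\in D$ into $P(K)$ by convex combination with some $\nu_0\in D\cap P(K)$ and renormalizing---is both circular (you need $D\cap P(K)\neq\emptyset$ to start) and vague about how ``renormalization'' interacts with membership in $D$. Drop it in favor of the $G_\delta$ observation.
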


\begin{thm}\label{tPSkeletonIffRskeleton}Let $(X,\|\cdot\|)$ be a Banach space. Then the following conditions are equivalent:
\begin{itemize}
	\item[(i)] $X$ has a $1$-projectional skeleton.
	\item[(ii)] There is a convex symmetric set induced by a retractional skeleton in $(B_{X^*},w^*)$.
\end{itemize}
Moreover, if $D$ is a 1-norming subspace of $X^*$, then:
\begin{itemize}
	\item[(iii)] If $D$ is a set induced by a $1$-projectional skeleton in $X$, then $D\cap B_{X^*}$ is induced by a retractional skeleton in $(B_{X^*},w^*)$.
	\item[(iv)] $D$ is a subset of a set induced by a $1$-projectional skeleton in $X$ if and only if $D\cap B_{X^*}$ is a subset of a set induced by a retractional skeleton in $(B_{X^*},w^*)$.
\end{itemize}
\end{thm}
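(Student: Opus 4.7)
The plan is to prove all four statements by reducing to the construction of a 1-projectional skeleton of $\C(K)$, where $K := (B_{X^*},w^*)$, via Theorem \ref{tMain}. For (i) $\Rightarrow$ (ii) and (iii), I would pass to adjoints: given a 1-projectional skeleton $\{P_s\}_{s\in\Gamma}$ on $X$, set $r_s := P_s^*\!\!\upharpoonright_{B_{X^*}}$. Since $\|P_s\|\leq 1$, $r_s$ maps $B_{X^*}$ into itself, and the identities $P_s^2=P_s$ and $P_s=P_s\circ P_t=P_t\circ P_s$ (for $s\leq t$) transfer by adjointness to $r_s^2=r_s$ and $r_s=r_s\circ r_t=r_t\circ r_s$. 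Separability of $P_s X$ makes $P_s^*[B_{X^*}]$ $w^*$-metrizable, and the pointwise convergence axiom of the retractional skeleton follows from Remark \ref{remarkSkeleton} together with $w^*$-continuity of $P_s^*$. A short computation (using that $P_s^*$ is a norm-one projection, so $P_s^*(y^*)=y^*$ for $y^*\in P_s^*[X^*]\cap B_{X^*}$) yields $P_s^*[X^*]\cap B_{X^*}=P_s^*[B_{X^*}]=r_s[K]$. Hence if $D=\bigcup_s P_s^*[X^*]$, then $D\cap B_{X^*}=\bigcup_s r_s[K]$ is induced by $\{r_s\}$, and is convex and symmetric because each $P_s^*[B_{X^*}]$ is. This establishes (iii) and, since $D$ is always 1-norming, the implication (i) $\Rightarrow$ (ii). The direction (iv) $(\Rightarrow)$ follows immediately from (iii) by intersecting the inclusion with $B_{X^*}$.

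For (ii) $\Rightarrow$ (i) and the substantive direction of (iv), I would embed $X\hookrightarrow\C(K)$ by evaluation and invoke Theorem \ref{tMain} with $Y_1=X$ and the retractional skeleton furnished by the hypothesis. The key technical step is verifying that $X$, viewed inside $\C(K)$, is $\tau_p(E)$-closed, where $E$ is the given retractional-skeleton-induced set. For this I would characterise $X\subset\C(K)$ as the set of continuous affine functions vanishing at $0$: given such an $f$, positive homogeneity and oddness extend $f$ to a linear functional $F$ on $X^*$ which is $w^*$-continuous on $B_{X^*}$ and hence, by Banach--Dieudonn\'e, $w^*$-continuous everywhere, so $F\in X$. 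Given a net $x_\alpha\in X$ with $x_\alpha\to f$ pointwise on the convex symmetric set $A:=D$ (in case (ii)) or $A:=D\cap B_{X^*}$ (in case (iv)), I would deduce: midpoint affineness of $f$ on $A$ from $\tfrac12(d+d')\in A$; oddness on $A$ from $-d\in A$ together with $f(0)=0$; and $w^*$-density of $A$ in $B_{X^*}$ from 1-norming (Hahn--Banach). Continuity of $f$ on $B_{X^*}$ then upgrades midpoint affineness on the dense convex symmetric set $A$ to full convex affineness on $B_{X^*}$, so $f\in X$. In case (iv) the inclusion $D\cap B_{X^*}\subset E$ makes $\tau_p(E)$ finer than $\tau_p(D\cap B_{X^*})$, so $\tau_p(D\cap B_{X^*})$-closedness suffices.

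Once Theorem \ref{tMain} produces a 1-projectional skeleton $\{P_s\}_{s\in\Gamma'}$ of $\C(K)$ with $P_s(X)\subset X$, I restrict to obtain the 1-projectional skeleton $\{P_s\!\!\upharpoonright_X\}_{s\in\Gamma'}$ of $X$ (giving (i)) and identify its induced set. The crucial identification is $(P_s\!\!\upharpoonright_X)^*(y^*)=r_s(y^*)$ for $y^*\in B_{X^*}$, which comes from the compatibility $(P_sx)(y^*)=x(r_s(y^*))=r_s(y^*)(x)$ between the interpretations of $y^*$ as a functional on $X$ and as a point of $K$. Since $\Gamma'$ is unbounded in $\Gamma$, one gets $\bigcup_{s\in\Gamma'}r_s[K]=\bigcup_{s\in\Gamma}r_s[K]=E$; hence the induced set $D_0$ of $\{P_s\!\!\upharpoonright_X\}_{s\in\Gamma'}$ satisfies $D_0\cap B_{X^*}=E$, and the assumption $D\cap B_{X^*}\subset E$ together with the fact that $D$ and $D_0$ are linear subspaces yields $D\subset D_0$ by scaling. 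I expect the main obstacle to be the $\tau_p$-closedness verification of the previous paragraph: promoting midpoint affineness on a convex symmetric $w^*$-dense subset $A\subset B_{X^*}$ to affineness on all of $B_{X^*}$ requires a careful interplay of the algebraic closure properties of $A$, its $w^*$-density (which itself rests on 1-normingness via Hahn--Banach), and the $w^*$-continuity of the limit $f$; everything else is essentially bookkeeping via the adjointness correspondence.
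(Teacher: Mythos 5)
Your proposal is correct and follows essentially the same route as the paper: embed $X$ isometrically into $\C(B_{X^*},w^*)$, show $I(X)$ is $\tau_p$-closed with respect to the (dense, convex, symmetric) induced set via the ``affine functions vanishing at $0$'' characterization, and apply Theorem \ref{tMain} (resp.\ Proposition \ref{pClosedSubspace}) to get the simultaneous skeleton, with the adjoint correspondence $P_s^*\!\!\upharpoonright_{B_{X^*}}\leftrightarrow r_s$ handling (iii) and the identification of induced sets in (iv). The only difference is that you inline proofs of the two ingredients the paper merely cites (the adjoint construction for (i)$\Rightarrow$(ii)/(iii) and the $\tau_p$-closedness lemma), and your sketches of those are sound.
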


Let us note that by \cite{kalenda02} there is a Banach space which has no PRI (and hence no \mbox{1-projectional} skeleton) but whose dual unit ball is Valdivia (and hence it has a retractional skeleton). Thus, Theorem \ref{tPSkeletonIffRskeleton} does not hold without the assumption on convexity and symmetry in (ii). However, the answer to the following question seems to be unknown.

\begin{question}Let $(X,\|\cdot\|)$ be a Banach space such that there is a convex set induced by a retractional skeleton in $(B_{X^*},w^*)$. Does $X$ have a $1$-projectional skeleton?
\end{question}

\begin{proof}[Proof of Theorem \ref{tAnswerCuth}]Implication (i)$\Rightarrow$(ii) is proved in {\cite[Proposition 3.15]{cuthCMUC}}, (ii)$\Rightarrow$(iii) is obvious and (iii)$\Rightarrow$(iv) follows from Lemma \ref{lBasicSkeleton}. Thus, it remains to prove (iv)$\Rightarrow$(i). Let us fix a convex set $D$ induced by a retractional skeleton in $P(K)$. Let us consider the injection $I:\C(K)\to \C(P(K))$ defined by $I(f)(\mu) = \mu(f)$, $\mu\in P(K)$, $f\in\C(K)$. Notice, that $F\in\C(P(K))$ belongs to $I(\C(K))$ if and only if $F$ is affine.

Indeed, obviously every $f\in I(\C(K))$ is affine. Moreover, if $F\in\C(P(K))$ is affine, we define $f\in\C(K)$ by $f(x) = F(\delta_x)$, where $\delta_x$ is the Dirac measure on $K$ supported by $x\in K$. Then $I(f) = F$.

Moreover, $I(\C(K))$ is a $\tau_p(D)$-closed subset in $\C(P(K))$. Indeed, let $F_{\nu}\stackrel{\tau_p(D)}{\to}F$ where $F_{\nu}\in I(\C(K))$ and $F\in \C(P(K))$. Using the fact that $D$ is convex and $F_{\nu}$ are affine, $F\!\!\upharpoonright_D$ is affine. As $D$ is dense in $P(K)$, $F$ is affine and hence $F\in I(\C(K))$.

By Theorem \ref{tMain}, $I(\C(K))$ has a 1-projectional skeleton. As $I(\C(K))$ is isometric to $\C(K)$, $\C(K)$ has a 1-projectional skeleton as well.
\end{proof}

Let us recall the following well-known lemma. Its proof can be found for example in {\cite[Lemma 2.14]{kalenda00}}.

\begin{lemma}\label{lXIsClosedInCBX}Let $X$ be a Banach space. Consider the isometry $I:X\to\C(B_{X^*},w^*)$ defined by $I(x)(x^*) = x^*(x)$, $x\in X$, $x^*\in B_{X^*}$. Then $f\in \C(B_{X^*},w^*)$ is an element of $I(X)$ if and only if $f$ is affine and $f(0) = 0$.

Moreover, if $D$ is a dense convex symmetric set in $B_{X^*}$, then $I(X)$ is $\tau_p(D)$-closed subset in $\C(B_{X^*},w^*)$.
\end{lemma}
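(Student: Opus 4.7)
The plan is to prove the two parts of the lemma in turn: the characterization of $I(X)$ inside $\C(B_{X^*},w^*)$ is the heart of the matter, and the $\tau_p(D)$-closedness then follows by a routine density argument. The ``only if'' direction of the characterization is immediate, since each $I(x)$ is the restriction to $B_{X^*}$ of a $w^*$-continuous linear functional on $X^*$, hence affine with $I(x)(0) = 0$. For the converse, given a $w^*$-continuous affine $f$ on $B_{X^*}$ with $f(0)=0$, I would extend $f$ to a linear functional $\tilde f$ on $X^*$ by positive homogeneity, setting $\tilde f(x^*) = \|x^*\|\, f(x^*/\|x^*\|)$ for $x^* \neq 0$ and $\tilde f(0) = 0$. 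Affineness together with $f(0)=0$ first yields $f(-y^*) = -f(y^*)$ on $B_{X^*}$ (from $0 = f(0) = \tfrac12 f(y^*) + \tfrac12 f(-y^*)$), so $\tilde f$ is odd; comparing $f(\tfrac12(x^*+y^*)) = \tfrac12(f(x^*)+f(y^*))$ with $f(\tfrac12(x^*+y^*)) = \tfrac12 f(x^*+y^*)$ (the latter being affineness applied to $x^*+y^*$ and $0$) gives additivity whenever $x^*, y^*, x^*+y^* \in B_{X^*}$, and positive homogeneity then propagates additivity to all of $X^*$. The restriction of $\tilde f$ to $B_{X^*}$ equals $f$ and is thus $w^*$-continuous, so the standard fact that a linear functional on $X^*$ which is $w^*$-continuous on $B_{X^*}$ must lie in $X$ (a consequence of Banach--Dieudonn\'e, or equivalently Krein--\v{S}mulian) produces $x\in X$ with $\tilde f = I(x)$.

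For the second part, suppose a net $(f_\nu) \subset I(X)$ converges in $\tau_p(D)$ to some $f \in \C(B_{X^*},w^*)$. Since $D$ is convex, symmetric and nonempty, it contains $0$, so $f(0) = \lim f_\nu(0) = 0$. For $d_1,d_2 \in D$ and $\lambda\in[0,1]$ the convexity of $D$ gives $\lambda d_1 + (1-\lambda) d_2 \in D$, and taking the pointwise limit in the affine identity satisfied by each $f_\nu$ shows that $f$ is affine on $D$. Density of $D$ in $B_{X^*}$ combined with the $w^*$-continuity of $f$ propagates this to all of $B_{X^*}$ (any $a,b\in B_{X^*}$ are $w^*$-limits of nets in $D$, and $\lambda d_1+(1-\lambda)d_2$ remains in $B_{X^*}$ by convexity), and the first part forces $f \in I(X)$.

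The only nontrivial ingredient is the extension-and-continuity step in the first paragraph; once one quotes the standard $w^*$-continuity criterion for linear functionals on $X^*$, the rest is straightforward. I expect no genuine obstacle here, as the lemma is essentially folklore and is cited mainly for convenient use in the sequel.
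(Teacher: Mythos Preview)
Your proof is correct. The paper itself does not prove this lemma; it records it as well known and refers to \cite[Lemma 2.14]{kalenda00} for a proof. Your argument is a complete, self-contained version of the standard approach: extend the affine $f$ with $f(0)=0$ to a linear functional on $X^*$ by homogeneity, invoke the Krein--\v{S}mulian (Banach--Dieudonn\'e) criterion to deduce $w^*$-continuity and hence membership in $X$, and then use density of $D$ to pass the affine identity from $D$ to all of $B_{X^*}$. One minor point worth making explicit in the density step: to show $f(\lambda a + (1-\lambda)b) = \lambda f(a) + (1-\lambda) f(b)$ for arbitrary $a,b\in B_{X^*}$, you need nets in $D$ converging to $a$ and $b$ indexed by a common directed set (take the product of the two index sets), so that the convex combination of the approximants converges to $\lambda a + (1-\lambda)b$; this is routine but is the only place the argument could be read as slightly elliptic.
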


Now we are ready to prove the second theorem.

\begin{proof}[Proof of Theorem \ref{tPSkeletonIffRskeleton}]The implication (i)$\Rightarrow$(ii) and the assertion (iii) are proved in {\cite[Proposition 3.14]{cuthCMUC}}. The ``only if'' part in (iv) follows from (iii).

Let us continue with proving (ii)$\Rightarrow$(i). Fix a convex symmetric set $D$ induced by a retractional skeleton in $(B_{X^*},w^*)$. Consider the isometry $I:X\to\C(B_{X^*},w^*)$ defined by $I(x)(x^*) = x^*(x)$, $x\in X$, $x^*\in B_{X^*}$. By Lemma \ref{lXIsClosedInCBX},  $I(X)$ is a $\tau_p(D)$-closed subset in $\C(B_{X^*},w^*)$. By Theorem \ref{tMain}, $I(X)$ has a 1-projectional skeleton. Thus, $X$ has a \mbox{1-projectional} skeleton and (ii)$\Rightarrow$(i) holds.

It remain to prove the ``if'' part of (iv). Let $D$ be a subspace of $X^*$ and $\mathfrak{s} = \{r_s\}_{s\in\Gamma}$ be a retractional skeleton in $(B_{X^*},w^*)$
with $D\cap B_{X^*}\subset D(\mathfrak{s})$. By Lemma \ref{lXIsClosedInCBX}, $I(X)$ is $\tau_p(D\cap B_{X^*})$-closed in $\C(B_{X^*},w^*)$; hence, it is also $\tau_p(D(\mathfrak{s}))$-closed. By Proposition \ref{pClosedSubspace}, we may without loss of generality assume that $\{P_s\!\!\upharpoonright_{I(X)}\}_{s\in\Gamma}$ is a 1-projectional skeleton in $I(X)$, where $P_s(f) = f\circ r_s$, $s\in\Gamma$, $f\in\C(B_{X^*},w^*)$. Hence, $\mathfrak{s}_X = \{I^{-1}\circ P_s\circ I\}_{s\in\Gamma}$ is a 1-projectional skeleton in $X$. In order to verify that $D\subset D(\mathfrak{s}_X)$, fix $d\in D$ and $s\in\Gamma$ such that $r_s(d) = d$. Fix $x\in X$. Then
\begin{align*}
(I^{-1}\circ P_s\circ I)^*(d)(x) & = (d\circ I^{-1})\; (P_s (I(x))) = (d\circ I^{-1})\;(I(x)\circ r_s)\\ & = (I(x)\circ r_s)(d) = I(x)(r_s(d)) = I(x)(d) = d(x).
\end{align*}
Thus, $(I^{-1}\circ P_s\circ I)^*(d) = d$ and $d\in D(\mathfrak{s}_X)$. Hence, (iv) holds.
\end{proof}

The following two theorems give a finer idea on when continuous image (resp. subspace) of a space with a retractional (resp. projectional) skeleton has again a retractional (resp. projectional) skeleton.

\begin{thm}\label{tContImage}Let $\varphi:K\to L$ be a continuous surjection between compact spaces. Assume $D$ is induced by a retractional skeleton in $K$ and put $B = \varphi(D)$. Then the following conditions are equivalent:
\begin{enumerate}[\upshape (i)]
	\item $B$ is induced by a retractional skeleton in $L$.
	\item $\varphi^*C(L) = \{f\circ\varphi:\;f\in\C(L)\}$ is $\tau_p(D)$-closed in $\C(K)$.
	\item $L = \beta B$ and $B$ is a Fr\'echet-Urysohn space.
	\item $L = \beta B$ and $\varphi\!\!\upharpoonright_D$ is a quotient mapping of $D$ onto $B$.
\end{enumerate}
\end{thm}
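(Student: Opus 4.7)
The plan is to establish the cycle (i)$\Rightarrow$(iii)$\Rightarrow$(iv)$\Rightarrow$(ii)$\Rightarrow$(i). The implication (i)$\Rightarrow$(iii) follows immediately by applying Lemma \ref{lBasicSkeleton}(ii) to a retractional skeleton in $L$ inducing $B$. For (iii)$\Rightarrow$(iv) I first observe that $D$ is countably compact: by Lemma \ref{lBasicSkeleton}(i) it is countably closed in the compact space $K$, so any countable subset of $D$ has compact closure sitting inside $D$. Since $B$ is assumed Fr\'echet-Urysohn, Lemma \ref{lFrechetQuotient} yields that $\varphi\!\!\upharpoonright_D\colon D\to B$ is closed, and a continuous closed surjection is a quotient mapping.

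For (iv)$\Rightarrow$(ii), suppose a net $g_\nu\circ\varphi$ with $g_\nu\in \C(L)$ converges in $\tau_p(D)$ to some $f\in\C(K)$. Whenever $d_1,d_2\in D$ satisfy $\varphi(d_1)=\varphi(d_2)$, each $g_\nu\circ\varphi$ takes the same value at $d_1$ and $d_2$, so in the limit $f(d_1)=f(d_2)$. Hence $f\!\!\upharpoonright_D$ factors as $h\circ(\varphi\!\!\upharpoonright_D)$ for some $h\colon B\to\er$; since $\varphi\!\!\upharpoonright_D$ is a quotient mapping and $h\circ(\varphi\!\!\upharpoonright_D)=f\!\!\upharpoonright_D$ is continuous, $h$ is continuous and bounded. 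Because $L=\beta B$, $h$ extends to some $g\in\C(L)$. Then $g\circ\varphi$ and $f$ agree on the dense set $D$, so $f=g\circ\varphi\in\varphi^*\C(L)$.

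The main implication is (ii)$\Rightarrow$(i). Let $\mathfrak{s}=\{r_s\}_{s\in\Gamma}$ be a retractional skeleton in $K$ with $D(\mathfrak{s})=D$, and let $P_s(f)=f\circ r_s$ denote the induced $1$-projectional skeleton in $\C(K)$. Applying Proposition \ref{pClosedSubspace} with $Y=\varphi^*\C(L)$, I obtain an unbounded $\sigma$-closed set $\Gamma'\subset\Gamma$ with $P_s(\varphi^*\C(L))\subset\varphi^*\C(L)$ for every $s\in\Gamma'$. Thus for each $f\in\C(L)$ and $s\in\Gamma'$ there is some $g\in\C(L)$ with $f\circ\varphi\circ r_s=g\circ\varphi$. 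For $x_1,x_2\in K$ with $\varphi(x_1)=\varphi(x_2)$ this yields $f(\varphi(r_s(x_1)))=f(\varphi(r_s(x_2)))$ for every $f\in\C(L)$, and since $\C(L)$ separates points of $L$ we get $\varphi(r_s(x_1))=\varphi(r_s(x_2))$. Therefore $r_s$ descends to a well-defined map $\rho_s\colon L\to L$ satisfying $\rho_s\circ\varphi=\varphi\circ r_s$, which is continuous because $\varphi$ is a quotient mapping (continuous surjection between compact Hausdorff spaces). Straightforward diagram chases, using the surjectivity of $\varphi$ and the axioms of $\{r_s\}$, verify all four axioms of a retractional skeleton for $\{\rho_s\}_{s\in\Gamma'}$; the unboundedness of $\Gamma'$ in $\Gamma$ gives $\bigcup_{s\in\Gamma'}r_s[K]=D$, so the induced set is $\bigcup_{s\in\Gamma'}\varphi[r_s[K]]=\varphi[D]=B$, as required.

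The main obstacle lies in (ii)$\Rightarrow$(i): beyond invoking Proposition \ref{pClosedSubspace}, one must verify that each $r_s$ with $s\in\Gamma'$ genuinely respects the fibers of $\varphi$ (this is where condition (ii) does the real work), and then carefully check that the induced set of the pushed-down skeleton $\{\rho_s\}_{s\in\Gamma'}$ is precisely $B=\varphi(D)$ rather than a strict subset or superset. The other implications are more formal, being essentially consequences of Lemmas \ref{lBasicSkeleton} and \ref{lFrechetQuotient} together with the universal property of $\beta B$.
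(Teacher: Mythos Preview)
Your proof is correct, and for the implications (i)$\Rightarrow$(iii)$\Rightarrow$(iv)$\Rightarrow$(ii) it matches the paper's argument essentially word for word.

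For (ii)$\Rightarrow$(i), however, you take a genuinely different and more direct route. The paper, after invoking Proposition \ref{pClosedSubspace}, passes to the dual side: it regards $Y=\varphi^*\C(L)$ with the restricted projections $T_s=P_s\!\!\upharpoonright_Y$, uses that $\{T_s^*\!\!\upharpoonright_{B_{Y^*}}\}$ is a retractional skeleton in $(B_{Y^*},w^*)$, embeds $L$ into $(B_{Y^*},w^*)$ via $h(l)=\delta_{\varphi^{-1}(l)}\!\!\upharpoonright_Y$, checks $h(\varphi(D))\subset h(L)\cap R$, and then appeals to parts (iii) and (v) of Lemma \ref{lBasicSkeleton} to identify the induced set with $h(\varphi(D))$. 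Your argument instead stays on the compact side: from $P_s(\varphi^*\C(L))\subset\varphi^*\C(L)$ you extract directly that $r_s$ preserves the fibers of $\varphi$, push $r_s$ down to a retraction $\rho_s$ on $L$ via the quotient $\varphi$, and verify the skeleton axioms for $\{\rho_s\}_{s\in\Gamma'}$ by elementary diagram chases. Your approach is shorter and avoids the dual-ball machinery; the paper's approach, by contrast, illustrates the interplay between projectional and retractional skeletons that is a recurring theme elsewhere in the article, and in particular does not require the separate (easy) observation that $\rho_s[L]=\varphi(r_s[K])$ is metrizable.
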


\begin{thm}\label{tSubspaceSkeleton}Let $(X,\|\cdot\|)$ be a Banach space, $Y\subset X$ a subspace and $D\subset X^*$ a set induced by $1$-projectional skeleton.  Then the following conditions are equivalent:
\begin{enumerate}[\upshape (i)]
	\item $D\!\!\upharpoonright_{Y}$ is induced by a $1$-projectional skeleton in $Y$.
	\item $D\!\!\upharpoonright_{Y}\cap B_{Y^*}$ is induced by a retractional skeleton in $(B_{Y^*},w^*)$.
	\item $Y$ is $\sigma(X,D)$-closed in $X$.
	\item $\beta((D\cap B_{X^*})\!\!\upharpoonright_{Y},w^*) = (B_{Y^*},w^*)$ and $((D\cap B_{X^*})\!\!\upharpoonright_{Y},w^*)$ is a Fr\'echet-Urysohn space.
	\item $\beta((D\cap B_{X^*})\!\!\upharpoonright_{Y},w^*) = (B_{Y^*},w^*)$ and $R:d\to d\!\!\upharpoonright_{Y}$ is a quotient mapping of $(D\cap B_{X^*},w^*)$ onto its image in  $(B_{Y^*},w^*)$.
\end{enumerate}
\end{thm}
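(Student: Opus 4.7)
The plan is to establish the chain (iii) $\Rightarrow$ (i) $\Rightarrow$ (ii) $\Rightarrow$ (iv) $\Rightarrow$ (iii) together with the equivalence (iv) $\Leftrightarrow$ (v). All five conditions tacitly require $Y$ to be norm-closed in $X$, so I assume this throughout. Let $\{P_s\}_{s\in\Gamma}$ be the 1-projectional skeleton in $X$ inducing $D$; by Theorem~\ref{tPSkeletonIffRskeleton}(iii) the assignment $r_s=P_s^*\!\upharpoonright_{B_{X^*}}$ yields a retractional skeleton in $(B_{X^*},w^*)$ inducing $D\cap B_{X^*}$. The two ambient maps I exploit are the canonical isometry $I\colon X\to\C(B_{X^*},w^*)$, $I(x)(x^*)=x^*(x)$, whose image is $\tau_p(D\cap B_{X^*})$-closed in $\C(B_{X^*},w^*)$ by Lemma~\ref{lXIsClosedInCBX}, and the restriction map $R\colon(B_{X^*},w^*)\to(B_{Y^*},w^*)$, $R(x^*)=x^*\!\upharpoonright_Y$, which is a continuous surjection by Hahn--Banach.

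For (iii) $\Rightarrow$ (i), I would first verify that $I(Y)$ is $\tau_p(D\cap B_{X^*})$-closed in $\C(B_{X^*},w^*)$: a $\tau_p$-limit of $I(y_\alpha)$ with $y_\alpha\in Y$ already lies in $I(X)$, so equals $I(x)$ for some $x\in X$; then $d(y_\alpha)\to d(x)$ for all $d\in D$, hence $x\in\ov{Y}^{\sigma(X,D)}=Y$ by (iii). The computation $(f\mapsto f\circ r_s)(I(x))=I(P_s(x))$ makes Corollary~\ref{cSimultaneousSkeleton}, applied with $Y_1=I(X)$ and $Y_2=I(Y)$, yield an up-directed unbounded $\sigma$-closed $\Gamma'\subset\Gamma$ with $P_s[Y]\subset Y$ for every $s\in\Gamma'$. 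Then $\{P_s\!\upharpoonright_Y\}_{s\in\Gamma'}$ is a 1-projectional skeleton in $Y$; by Hahn--Banach $(P_s\!\upharpoonright_Y)^*[Y^*]=P_s^*[X^*]\!\upharpoonright_Y$, and cofinality of $\Gamma'$ in $\Gamma$ (which forces $P_t^*[X^*]\subset P_s^*[X^*]$ whenever $t\le s$) turns its induced set into $D\!\upharpoonright_Y$.

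Step (i) $\Rightarrow$ (ii) is Theorem~\ref{tPSkeletonIffRskeleton}(iii) applied to $Y$ with the skeleton from (i). For (ii) $\Rightarrow$ (iv), the crucial point is the set identity $(D\cap B_{X^*})\!\upharpoonright_Y=D\!\upharpoonright_Y\cap B_{Y^*}$. The inclusion $\subseteq$ is trivial; for the reverse, $(D\cap B_{X^*})\!\upharpoonright_Y$ is $w^*$-dense in $B_{Y^*}$ (extend $y^*\in B_{Y^*}$ via Hahn--Banach and use $w^*$-density of $D\cap B_{X^*}$ in $B_{X^*}$) and countably closed in $(B_{Y^*},w^*)$: for any countable $\{d_n\!\upharpoonright_Y\}$ the set $\ov{\{d_n\}}^{w^*}$ lies in $D\cap B_{X^*}$ by Lemma~\ref{lBasicSkeleton}(i), and the continuous image $R(\ov{\{d_n\}}^{w^*})$ is closed in $B_{Y^*}$ and contains $\ov{\{d_n\!\upharpoonright_Y\}}^{w^*}$. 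Since by (ii) the larger set has a retractional skeleton, Lemma~\ref{lBasicSkeleton}(v) forces equality. Consequently $(D\cap B_{X^*})\!\upharpoonright_Y$ is induced by a retractional skeleton in $(B_{Y^*},w^*)$, and (iv) follows from Lemma~\ref{lBasicSkeleton}(i)--(ii). The equivalence (iv) $\Leftrightarrow$ (v) is Theorem~\ref{tContImage} applied to $R$ with $D_K=D\cap B_{X^*}$ and $B=(D\cap B_{X^*})\!\upharpoonright_Y$.

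Finally, (iv) $\Rightarrow$ (iii): by Theorem~\ref{tContImage}(ii) the set $R^*\C(B_{Y^*},w^*)=\{f\circ R:f\in\C(B_{Y^*},w^*)\}$ is $\tau_p(D\cap B_{X^*})$-closed in $\C(B_{X^*},w^*)$. Given $x\in\ov{Y}^{\sigma(X,D)}$, a net $(y_\alpha)\subset Y$ with $d(y_\alpha)\to d(x)$ for every $d\in D$ satisfies $I(y_\alpha)\to I(x)$ in $\tau_p(D\cap B_{X^*})$, and each $I(y_\alpha)$ factors through $R$, so $I(x)=g\circ R$ for some $g\in\C(B_{Y^*},w^*)$. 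Hence $x^*(x)$ depends only on $x^*\!\upharpoonright_Y$, which forces $x$ to annihilate $Y^\perp$ and thus $x\in Y^{\perp\perp}=Y$. The main obstacle I expect is the set-identification $(D\cap B_{X^*})\!\upharpoonright_Y=D\!\upharpoonright_Y\cap B_{Y^*}$ in (ii) $\Rightarrow$ (iv), together with the bookkeeping of which of the two variants of the dual set appears in each condition; once that is in place, the remainder is a careful assembly of Theorem~\ref{tMain}, Theorem~\ref{tPSkeletonIffRskeleton}, and Theorem~\ref{tContImage}.
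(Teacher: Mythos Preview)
Your proof is correct and follows the same architecture as the paper's: the cycle (iii)$\Rightarrow$(i)$\Rightarrow$(ii)$\Rightarrow$(iv) is handled identically (isometry into $\C(B_{X^*},w^*)$, Proposition~\ref{pClosedSubspace}/Corollary~\ref{cSimultaneousSkeleton}, Theorem~\ref{tPSkeletonIffRskeleton}(iii), and the identification $(D\cap B_{X^*})\!\!\upharpoonright_Y=D\!\!\upharpoonright_Y\cap B_{Y^*}$ via Lemma~\ref{lBasicSkeleton}(v)).

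The one genuine organizational difference is in closing the cycle. The paper does not invoke Theorem~\ref{tContImage}; it proves (iv)$\Rightarrow$(v) directly from Lemma~\ref{lFrechetQuotient} and then (v)$\Rightarrow$(iii) by hand, defining $y(R(d))=d(x)$ on the image, extending via $\beta B=B_{Y^*}$, and recognizing the extension as an element of $Y$ through Lemma~\ref{lXIsClosedInCBX}. You instead observe that conditions (iv) and (v) here are verbatim conditions (iii) and (iv) of Theorem~\ref{tContImage} for the surjection $R\colon B_{X^*}\to B_{Y^*}$, get (iv)$\Leftrightarrow$(v) for free, and use the remaining equivalent condition (ii) of Theorem~\ref{tContImage} ($R^*\C(B_{Y^*})$ is $\tau_p$-closed) to run (iv)$\Rightarrow$(iii). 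This is a cleaner packaging: Theorem~\ref{tContImage} already encapsulates exactly the argument the paper repeats, so delegating to it avoids duplication. The paper's direct route has the minor advantage of being self-contained and making the role of the quotient property explicit, but your version is shorter and makes the parallelism between the two theorems transparent. One cosmetic point: writing $x\in Y^{\perp\perp}=Y$ is slightly ambiguous (the bipolar lives in $X^{**}$); it would be clearer to say that $x$ vanishes on $Y^\perp$ and hence lies in the norm-closed subspace $Y$.
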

\begin{proof}[Proof of Theorem \ref{tContImage}]The assertion (i)$\Rightarrow$(iii) follows from Lemma \ref{lBasicSkeleton}. Assume (iii) is true. Then, using Lemma \ref{lFrechetQuotient}, $\varphi\!\!\upharpoonright_D$ is closed, and therefore a quotient mapping. Hence, (iii)$\Rightarrow$(iv) is proved.

(iv)$\Rightarrow$(ii)\; Assume that (iv) holds and fix a net of functions $f_\nu$ from $\C(L)$ such that
$f_\nu\circ\varphi\stackrel{\tau_p(D)}{\to} g\in\C(K)$. Now, define function $f$ as $f(\varphi(d)) = g(d)$, $d\in D$. As $\varphi\!\!\upharpoonright_D$ is a quotient mapping, $f$ is continuous and bounded (and defined on $B$). Hence, there is a continuous extension $\tilde{f}\in\C(L)$, $\tilde{f}\supset f$. As $\tilde{f}\circ \varphi = g$ on the dense set $D$, $\tilde{f}\circ \varphi = g$ on $K$ and $g\in \varphi^*C(L)$. Thus, (iv)$\Rightarrow$(ii) is proved.

(ii)$\Rightarrow$(i)\; Assume that (ii) holds. Let $\mathfrak{s} = \{r_s\}_{s\in\Gamma}$ be a retractional skeleton in $K$ such
that $D(\mathfrak{s}) = D$. By Proposition \ref{pClosedSubspace}, we can without loss of generality assume that
$\{P_s\!\!\upharpoonright_{\varphi^*\C(L)}\}_{s\in\Gamma}$ is a 1-projectional skeleton in $\varphi^*\C(L)$, where $P_s(f) = f\circ
r_s$, $f\in\C(K)$, $s\in\Gamma$. In the rest of this proof we will denote by $Y$ (resp. $T_s$) the space $\varphi^*\C(L)$ (resp.
projections $P_s\!\!\upharpoonright_{\varphi^*\C(L)}$). Recall that by \cite{kubis}, $\{T^*_s\!\!\upharpoonright_{B_{Y^*}}\}_{s\in\Gamma}$ is retractional skeleton in $(B_{Y^*},w^*)$; hence, $R = \bigcup_{s\in\Gamma}T^*_s(B_{Y^*})$ is induced by a retractional skeleton in $(B_{Y^*},w^*)$.

Observe, that $L$ is homeomorphic to a subset of $(B_{Y^*},w^*)$. Indeed, let us define the mapping $h:L\to (B_{Y^*},w^*)$ by $h(l) = \delta_{\varphi^{-1}(l)}\!\!\upharpoonright_{Y}$, where $\delta_{\varphi^{-1}(l)}$ is the Dirac measure on $K$ supported by a point from ${\varphi^{-1}(l)}$. It is easy to observe that $h$ is a homeomorphism onto $h(L)$.

Now, we will verify that $h(\varphi(D))\subset h(L)\cap R$. Fix $s\in\Gamma$ and $k\in K$. We would like to see that $\mu = h(\varphi(r_s(k)))\in R$. Hence, we need to see $T^*_s(\mu) = \mu$. Fix $f\in\C(L)$. Then $P_s(f\circ\varphi)\in Y$; hence, there exists $g\in\C(L)$ such that $f\circ\varphi\circ r_s = g\circ \varphi$. Moreover, $f\circ\varphi\circ r_s = f\circ \varphi$ on $r_s[K]$; thus, $g\circ \varphi = f\circ\varphi$ on $r_s[K]$. Now,
$$T^*_s(\mu)(f\circ\varphi) = \mu(f\circ\varphi\circ r_s) = \mu(g\circ\varphi) = (g\circ\varphi)(r_s(k)) = (f\circ\varphi)(r_s(k)) = \mu(f\circ\varphi),$$
and $T^*_s(\mu) = \mu$.

Using the above and the fact that $\varphi(D)$ is dense in $L$, $h(L)\cap R$ is dense in $h(L)$. By Lemma \ref{lBasicSkeleton}, $h(L)\cap R$ is induced by a retractional skeleton in $h(L)$. By Lemma \ref{lBasicSkeleton} (v), $h(\varphi(D)) =  h(L)\cap R$. Hence, $\varphi(D)$ is induced by a retractional skeleton in $L$. This finishes the proof.
\end{proof}

\begin{proof}[Proof of Theorem \ref{tSubspaceSkeleton}]By Theorem \ref{tPSkeletonIffRskeleton} (iii), (i)$\Rightarrow$(ii) is true.

(ii)$\Rightarrow$(iv)\;Let us assume that (ii) holds. Then $D\!\!\upharpoonright_{Y}\cap
B_{Y^*}$ (resp. $D\cap B_{X^*}$) is induced by a retractional skeleton in $(B_{Y^*},w^*)$ (resp. $(B_{X^*},w^*)$). By Lemma
\ref{lBasicSkeleton}, $D\!\!\upharpoonright_{Y}\cap B_{Y^*}$ (resp. $D\cap B_{X^*}$) is dense and countably compact in $(B_{Y^*},w^*)$ (resp. $(B_{X^*},w^*)$). Let us consider the injection $I:Y\hookrightarrow X$. Then $I^*$ is $w^*-w^*$ continuous and $I^*(D\cap B_{X^*}) = (D\cap B_{X^*})\!\!\upharpoonright_{Y}$ is dense and countably compact in $(B_{Y^*},w^*)$. By Lemma \ref{lBasicSkeleton} (v), $(D\cap B_{X^*})\!\!\upharpoonright_{Y} = D\!\!\upharpoonright_{Y}\cap B_{Y^*}$ and (iv) holds.

Assume (iv) is true. Then, using Lemma \ref{lFrechetQuotient}, $R$ is closed, and therefore a quotient mapping. Hence, (iv)$\Rightarrow$(v) is proved.

(v)$\Rightarrow$(iii)\; Assume that (v) holds and fix a net $y_\nu$ from $Y$ such that $y_\nu\stackrel{\tau_p(D)}{\to} x\in X$. Now, define function $y$ as $y(R(d)) = d(x)$, $d\in D$. Since $R$ is a quotient mapping, $y$ is continuous and bounded (and defined on $(D\cap B_{X^*})\!\!\upharpoonright_{Y}$). Hence, there is a continuous extension $\tilde{y}\in \C(B_{Y^*},w^*)$, $\tilde{y}\supset y$. As $\tilde{y}\circ R = x$ on the dense set $D\cap B_{X^*}$, $\tilde{y}\circ R = x$ on $(B_{X^*},w^*)$. Thus, $\tilde{y}$ is affine on $B_{Y^*}$ and $\tilde{y}(0) = 0$. By Lemma \ref{lXIsClosedInCBX}, there exists $z\in Y$ such that $y^*(z) = \tilde{y}(y^*)$ for every $y^*\in B_{Y^*}$. Consequently, $x=z\in Y$ and $Y$ is $\sigma(X,D)$-closed in $X$.

(iii)$\Rightarrow$(i)\; Let $\mathfrak{s} = \{P_s\}_s\in\Gamma$ be the 1-projectional skeleton in $X$ such that $D = D(\mathfrak{s})$ and let $Y$ be $\sigma(X,D)$-closed in $X$. Consider the isometry $I:X\to\C(B_{X^*},w^*)$ defined by $I(x)(x^*) = x^*(x)$, $x\in X$, $x^*\in B_{X^*}$. By Theorem \ref{tPSkeletonIffRskeleton},  $D\cap B_{X^*}$ is induced by a retractional skeleton in $(B_{X^*},w^*)$. By Lemma \ref{lXIsClosedInCBX}, $I(X)$ is a $\tau_p(D\cap B_{X^*})$-closed subset of $\C(B_{X^*},w^*)$. We claim that $I(Y)$ is $\tau_p(D\cap B_{X^*})$-closed in $\C(B_{X^*},w^*)$.

Indeed, let $I(y_{\nu})\stackrel{\tau_p(D\cap B_{X^*})}{\longrightarrow}f$ where $y_{\nu}\in Y$ and $f\in \C(B_{X^*},w^*)$. As
$I(X)$ is $\tau_p(D\cap B_{X^*})$-closed, $f=I(x)$ for some $x\in X$. Now it is easy to observe that
$y_\nu\stackrel{\sigma(X,D)}{\longrightarrow}x$; hence, $x\in Y$. Thus, $f = I(x)\in I(Y)$ and the claim is proved.

Recall that by \cite{kubis}, $\{P^*_s\!\!\upharpoonright_{B_{X^*}}\}_{s\in\Gamma}$ is the retractional skeleton in $(B_{X^*},w^*)$ which induces the set $D\cap B_X^*$ and $\{T_s\}_{s\in\Gamma}$ is a projectional skeleton in $\C(B_{X^*},w^*)$, where $T_s$ is defined by $T_s(f) = f\circ P^*_s\!\!\upharpoonright_{B_{X^*}}$, $s\in\Gamma$, $f\in \C(B_{X^*},w^*)$. By Proposition \ref{pClosedSubspace}, we can without loss of generality assume that $T_s(I(Y))\subset I(Y)$ for every $s\in\Gamma$. Thus, $\mathfrak{s}_Y = \{(I^{-1}\circ T_s\circ I)\!\!\upharpoonright_{Y}\}_{s\in\Gamma}$ is a 1-projectional skeleton in $Y$. It is straightforward to check that, for every $s\in\Gamma$, $(I^{-1}\circ T_s\circ I)\!\!\upharpoonright_X = P_s$. Thus, $\mathfrak{s}_Y = \{P_s\!\!\upharpoonright_{Y}\}_{s\in\Gamma}$ is a 1-projectional skeleton in $Y$ and $D(\mathfrak{s}_Y) = D\!\!\upharpoonright_Y$.
\end{proof}

\section{A new characterization of Asplund spaces}

In \cite{kalenda01} there has been introduced a new class of Banach spaces, $(T)$. A Banach space $X$ belongs to $(T)$ if and
only if $B_X$ is contained in a ``$\Sigma$-subset'' of $(B_{X^{**}},w^*)$; i.e., $B_X$ is contained in a set induced by a
commutative retractional skeleton. Recall that every space from $(T)$ is Asplund. The
class $(T)$ has been used to prove some results
concerning biduals of Asplund spaces. Namely, if the norm on a Banach space $X$ is Kadec, then $X$ is in $(T)$ if and only if
the bidual unit ball is a Valdivia compact space. There has been raised a question, whether $X$ is Asplund whenever the bidual unit ball is Valdivia after every equivalent renorming of $X$. This problem has been solved by O. Kalenda in an unpublished remark, where it is proved that the answer to the problem is positive.

In the following we first observe that, by Theorem \ref{tPSkeletonIffRskeleton}, the noncommutative version of the condition
determining the class $(T)$ gives a characterization of Asplund spaces. In this way, we may look at Asplund spaces as at the ``noncommutative class $(T)$''. Using this observation, we show that ``commutative'' results concerning the class $(T)$
(including the unpublished remark) have
their ``noncommutative'' versions concerning Asplund spaces. In particular, we show that a Banach space $X$ is Asplund if
and only if the bidual unit ball has a retractional skeleton after every equivalent renorming of $X$.

It remains open whether a Banach space $X$ is in $(T)$ whenever the bidual unit ball is Valdivia after every equivalent
renorming of $X$. This question has been already raised in \cite{kalenda01}.

Let us start with the observation that Asplund spaces form exactly the ``noncommutative class $(T)$''.

\begin{thm}\label{tAsplund}Let $(X,\|\cdot\|)$ be a Banach space. Then the following conditions are equivalent:
\begin{enumerate}[\upshape (i)]
	\item $X$ is Asplund.
	\item $X$ is a subset of a set induced by a $1$-projectional skeleton in $X^*$.
	\item $B_X$ is a subset of a set induced by a retractional skeleton in $(B_{X^{**}},w^*)$.
\end{enumerate}
\end{thm}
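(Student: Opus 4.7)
The plan is to close the chain (i) $\Rightarrow$ (ii) $\Leftrightarrow$ (iii) $\Rightarrow$ (i). The equivalence (ii) $\Leftrightarrow$ (iii) is purely formal and comes straight out of Theorem \ref{tPSkeletonIffRskeleton}(iv), applied with the roles of $X$ and $X^*$ played by $X^*$ and $X^{**}$, and with the $1$-norming subspace chosen to be $X\subseteq X^{**}$ (it is $1$-norming by Hahn--Banach). Since $X\cap B_{X^{**}}=B_X$, the biconditional in (iv) is exactly (ii) $\Leftrightarrow$ (iii).

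For (iii) $\Rightarrow$ (i) I would fix a retractional skeleton $\{r_s\}_{s\in\Gamma}$ in $(B_{X^{**}},w^*)$ whose induced set contains $B_X$, and show that an arbitrary separable $Y\subseteq X$ has a separable dual by exhibiting $(B_Y,\text{weak})$ as metrizable. Pick a norm-dense sequence $(y_n)_{n\in\en}\subseteq B_Y$; each $y_n$ is in the induced set, so is fixed by some $r_{s_n}$. By up-directedness and $\sigma$-completeness I may take $(s_n)$ increasing and set $s=\sup_n s_n$, which gives $r_s(y_n)=r_s(r_{s_n}(y_n))=r_{s_n}(y_n)=y_n$ for every $n$. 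The image $r_s(B_{X^{**}})$ coincides with the fixed-point set of $r_s$, is $w^*$-closed, and contains $\{y_n\}$; therefore it contains the $w^*$-closure of the sequence and hence the norm-closure $B_Y$. Since $r_s(B_{X^{**}})$ is $w^*$-metrizable by the definition of a retractional skeleton, $(B_Y,w^*)$ is metrizable, and because the $w^*$-topology of $X^{**}$ restricts on $Y$ to the weak topology of $Y$ (Hahn--Banach ensures $X^*\!\!\upharpoonright_Y=Y^*$), $Y^*$ is separable and $X$ is Asplund.

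For (i) $\Rightarrow$ (ii) I would invoke Kubi\'s's characterization of Asplund spaces: if $X$ is Asplund, then $X^*$ admits a $1$-projectional skeleton $\{P_s\}_{s\in\Gamma}$ with $P_s^*[X^{**}]\subseteq X$ for every $s\in\Gamma$. Put $D:=\bigcup_{s\in\Gamma}P_s^*[X^{**}]\subseteq X$; the task reduces to showing $D=X$. Two features of $D$ are decisive. First, $D$ is $1$-norming for $X^*$, so by the bipolar theorem $D$ is $w^*$-dense in $X^{**}$, and as $D\subseteq X$ this means $D$ is weak-dense in the subspace $X$; since $D$ is a linear subspace, Mazur's theorem upgrades this to norm-density. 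Second, $D$ is $w^*$-countably closed in $X^{**}$ (via Theorem \ref{tPSkeletonIffRskeleton}(iii) together with Lemma \ref{lBasicSkeleton}(i)), and because norm convergence implies $w^*$-convergence, $D$ is norm-closed in $X$. A norm-dense, norm-closed subspace of $X$ equals $X$, so $D=X$ and (ii) holds.

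The main obstacle is exactly this last upgrade in (i) $\Rightarrow$ (ii): Kubi\'s's theorem only supplies the inclusion $D\subseteq X$, and converting it into $X\subseteq D$ requires the interplay of $1$-norming (for $w^*$-density), Mazur's theorem (to cross from weak to norm density), and $w^*$-countable closedness (for norm-closedness). The rest of the proof is a direct application of the projectional/retractional skeleton dictionary packaged in Theorem \ref{tPSkeletonIffRskeleton}.
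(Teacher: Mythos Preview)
Your (ii)$\Leftrightarrow$(iii) via Theorem \ref{tPSkeletonIffRskeleton}(iv) matches the paper exactly, and your direct argument for (iii)$\Rightarrow$(i) is correct and in fact more self-contained than the paper, which simply cites \cite[Proposition 26]{kubis} for the full equivalence (i)$\Leftrightarrow$(ii).

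The genuine problem is in (i)$\Rightarrow$(ii). What you attribute to Kubi\'s --- a $1$-projectional skeleton on $X^*$ with $P_s^*[X^{**}]\subseteq X$ for every $s$ --- is not what he proves, and is false in general. Observe that $P_s^*[X^{**}]=(\ker P_s)^\perp$ is always $w^*$-closed in $X^{**}$; a $w^*$-closed subspace lying inside $X$ must coincide with its own bidual and hence be reflexive, so your hypothesis would force each $P_s[X^*]$ to be reflexive. For $X=\C([0,\omega_1])$ the natural skeleton on $X^*\cong\ell_1([0,\omega_1])$ has ranges isomorphic to $\ell_1$, which is not reflexive. Thus the starting point $D\subseteq X$ of your density/closedness upgrade is unavailable.

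What Kubi\'s (and \cite{cuthFabian}) actually establish is that the skeleton on $X^*$ consists of $w^*$-$w^*$ continuous projections, i.e.\ $P_s=Q_s^*$ for a projectional skeleton $\{Q_s\}$ on $X$. Then $P_s^*\!\upharpoonright_X=Q_s$, so $X=\bigcup_s Q_s[X]=\bigcup_s P_s^*[X]\subseteq\bigcup_s P_s^*[X^{**}]=D$, which is exactly (ii). In other words, Kubi\'s already gives the inclusion $X\subseteq D$ directly; you were trying to manufacture it from the opposite (and unavailable) inclusion $D\subseteq X$. The paper accordingly just cites \cite[Proposition 26]{kubis} for (i)$\Leftrightarrow$(ii) and invokes Theorem \ref{tPSkeletonIffRskeleton} for (ii)$\Leftrightarrow$(iii).
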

\begin{proof}The equivalence (i)$\Leftrightarrow$(ii) is proved in {\cite[Proposition 26]{kubis}} (for a simpler proof of (i)$\Rightarrow$(ii) see also \cite{cuthFabian}). The equivalence (ii)$\Leftrightarrow$(iii) follows from Theorem \ref{tPSkeletonIffRskeleton}.
\end{proof}

Notice that, by \cite[Example 4.10]{kalenda01}, $\C(K)^*$ has a commutative 1-projectional skeleton whenever $K$ is a compact space. Thus, condition (ii) in Theorem \ref{tAsplund} cannot be in general replaced by assuming that $X^*$ has a 1-projectional skeleton.

However, if $X$ has a Kadec norm, then the condition (ii) in Theorem \ref{tAsplund} may be weakened in the above mentioned way. This follows from the following ``noncommutative version'' of {\cite[Theorem 4.9]{kalenda01}}. Recall that a norm is called \textit{Kadec} if the norm and weak topologies coincide on the unit sphere, and that each locally uniformly rotund norm is Kadec, see e.g. \cite[Exercise 8.45]{fab}.
\begin{proposition}\label{pKadec}Assume that the norm on a Banach space $(X,\|\cdot\|)$ is Kadec. Then the following assertions are equivalent:
\begin{enumerate}[\upshape (i)]
	\item $X$ is Asplund.
	\item $X^*$ has a $1$-projectional skeleton.
	\item $(B_{X^{**}},w^*)$ has a retractional skeleton.
\end{enumerate}
\end{proposition}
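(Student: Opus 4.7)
The proof follows the cycle (i) $\Rightarrow$ (ii) $\Rightarrow$ (iii) $\Rightarrow$ (i), with the Kadec hypothesis entering only in the last implication. The implication (i) $\Rightarrow$ (ii) is immediate from Theorem \ref{tAsplund}: Asplundness places $X$ inside a set induced by a $1$-projectional skeleton on $X^*$, so such a skeleton exists on $X^*$. The implication (ii) $\Rightarrow$ (iii) is Theorem \ref{tPSkeletonIffRskeleton} applied to the Banach space $X^*$: a $1$-projectional skeleton on $X^*$ yields a convex symmetric set induced by a retractional skeleton on $(B_{X^{**}},w^*)$, whence (iii).

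The main implication is (iii) $\Rightarrow$ (i). Given a retractional skeleton $\mathfrak{s}=\{r_s\}_{s\in\Gamma}$ on $(B_{X^{**}},w^*)$ with induced set $D$, by Theorem \ref{tAsplund} it suffices to show $B_X\subset D$. Fix $x\in B_X$; then $r_s(x)\in D$, $r_s(x)\to x$ in $w^*$, and by $w^*$-lower semicontinuity of the norm and $\|r_s(x)\|\leq 1$ one also has $\|r_s(x)\|\to \|x\|$. If an increasing sequence $(s_n)\subset\Gamma$ with $r_{s_n}(x)\to x$ in $w^*$ could be produced, countable closedness of $D$ in $(B_{X^{**}},w^*)$ (Lemma \ref{lBasicSkeleton}(i)) would force $x\in D$, completing the argument.

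The main obstacle is this sequential extraction, and the Kadec hypothesis is exactly what I would use to carry it out, following Kalenda's commutative treatment \cite[Theorem 4.9]{kalenda01}. The cleanest route is to pass first from (iii) to (ii) --- this is where the $w^*$-metrizability of each piece $r_s(B_{X^{**}})$ enters, via an inductive construction of an increasing chain $(s_n)$ gathering the countably many functionals in $X^*$ which control the topologies of the individual pieces, in the spirit of the proof of Proposition \ref{pClosedSubspace}. Once a $1$-projectional skeleton $\{P_s\}$ on $X^*$ is in hand, $w^*$-$w^*$ continuity of the $P_s$ (see \cite{kubis}) means they are adjoints of norm-one projections $Q_s\colon X\to X$, and hence $P_s^*$ restricts to $Q_s$ on $X\subset X^{**}$. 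For $x\in B_X$ with $\|x\|=1$, the net $Q_s(x)\to x$ weakly in $X$ with $\|Q_s(x)\|\to 1$; the Kadec property promotes this to norm convergence, and directedness of $\Gamma$ then yields an increasing sequence $(s_n)$ with $Q_{s_n}(x)\to x$ in norm, hence in $w^*$. Countable closedness of the induced set in $(B_{X^{**}},w^*)$ concludes $x\in D$. The case $\|x\|<1$ reduces by scaling to the sphere case, using that each $P_s^*(X^{**})$ is a linear subspace, so that the induced set is closed under scalar multiplication and contains $0$.
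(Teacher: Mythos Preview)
Your treatment of (i)$\Rightarrow$(ii) and (ii)$\Rightarrow$(iii) is correct and matches the paper, and you correctly identify that for (iii)$\Rightarrow$(i) it suffices, via Theorem~\ref{tAsplund}, to prove $B_X\subset D$. The problem is the mechanism you propose for this inclusion. Your detour through (ii) rests on the assertion that the projections $P_s$ of a $1$-projectional skeleton on $X^*$ are $w^*$--$w^*$ continuous, hence adjoints of projections $Q_s$ on $X$. This is not true in general, and nothing in \cite{kubis} says so: what is automatic is that $P_s^*\colon X^{**}\to X^{**}$ is $w^*$--$w^*$ continuous, but that does not force $P_s^*(X)\subset X$. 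Without bona fide projections on $X$ you cannot run the Kadec argument (upgrading weak convergence of $Q_s(x)$ to norm convergence). There is a second, independent gap: even if such $Q_s$ existed, you would need $Q_{s_n}(x)\in D$ in order to invoke countable closedness of $D$, yet $D$ is the set induced by the \emph{given} retractional skeleton $\{r_s\}$, and you have not linked your hypothetical $P_s^*$ back to the $r_s$. Finally, the passage from (iii) to (ii) itself is only gestured at; Theorem~\ref{tPSkeletonIffRskeleton} would require $D$ to be convex and symmetric, which (iii) does not provide.

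The paper's argument (following \cite[Theorem~4.9]{kalenda01}) avoids all of this by using the Kadec hypothesis in a different way: one shows directly that each $x\in S_X$ is a $w^*$-$G_\delta$ point of $(B_{X^{**}},w^*)$. Roughly, since weak and norm topologies agree on $S_X$, a countable norm-neighbourhood base of $x$ in $S_X$ is given by traces of weak-open sets, which extend to $w^*$-open sets in $X^{**}$; intersecting with the $w^*$-open sets $\{F:\|F\|>1-1/m\}$ and using a Goldstine approximation pins down $\{x\}$. Then Lemma~\ref{lBasicSkeleton}(iv) (the replacement for Kalenda's Lemma~2.4 on $\Sigma$-subsets) gives $x\in D$ immediately---no sequence extraction, no projections on $X$, and no need for convexity or symmetry of $D$.
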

\begin{proof}The implication (i)$\Rightarrow$(ii) follows from Theorem \ref{tAsplund} and (ii)$\Rightarrow$(iii) is a
consequence of Theorem \ref{tPSkeletonIffRskeleton}. Let us assume $D$ is a set induced by a retractional skeleton in
$(B_{X^{**}},w^*)$. Using Theorem \ref{tAsplund}, it is enough to show that $B_X\subset D$. In order to prove it, we follow
the lines of the proof from {\cite[Theorem 4.9]{kalenda01}}, using only Lemma \ref{lBasicSkeleton} instead of {\cite[Lemma 2.4]{kalenda01}}.
\end{proof}

Now we give the new characterization of Asplund spaces we mentioned above.

\begin{thm}\label{tAsplundRenorming}Let $X$ be a Banach space. Then the following assertions are equivalent:
\begin{enumerate}[\upshape (i)]
	\item $X$ is Asplund.
	\item $(X,|\cdot|)^*$ has a $1$-projectional skeleton for every equivalent norm $|\cdot|$ on $X$.
	\item $(B_{(X,|\cdot|)^{**}},w^*)$ has a retractional skeleton for every equivalent norm $|\cdot|$ on $X$.
\end{enumerate}
\end{thm}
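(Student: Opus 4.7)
The plan is to prove the cycle $(i) \Rightarrow (ii) \Rightarrow (iii) \Rightarrow (i)$, where the first two implications are essentially bookkeeping using the machinery already developed, while $(iii) \Rightarrow (i)$ carries the actual content.

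For $(i) \Rightarrow (ii)$: Since the Asplund property is an isomorphic invariant, $(X,|\cdot|)$ is Asplund for every equivalent norm $|\cdot|$. I would then apply Theorem \ref{tAsplund} to $(X,|\cdot|)$: this gives that $X$ is contained in a set induced by a $1$-projectional skeleton in $(X,|\cdot|)^*$, and in particular $(X,|\cdot|)^*$ admits a $1$-projectional skeleton.

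For $(ii) \Rightarrow (iii)$: For each equivalent norm $|\cdot|$, I would apply Theorem \ref{tPSkeletonIffRskeleton} (the implication (i)$\Rightarrow$(ii) there) to $(X,|\cdot|)$: a $1$-projectional skeleton on $(X,|\cdot|)^*$ yields a convex symmetric subset induced by a retractional skeleton in $(B_{(X,|\cdot|)^{**}},w^*)$, and this in particular provides a retractional skeleton on the whole ball. Both of these first two implications are immediate given the earlier theory.

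For $(iii) \Rightarrow (i)$: This is the essential step. The natural approach is to combine (iii) with Proposition \ref{pKadec}: if $X$ carries an equivalent \emph{Kadec} norm $|\cdot|$, then (iii) supplies a retractional skeleton on $(B_{(X,|\cdot|)^{**}},w^*)$, and Proposition \ref{pKadec} upgrades this to Asplundness of $X$. So as long as one can produce any equivalent Kadec (for instance, LUR) renorming of $X$, the proof is complete.

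The main obstacle is the existence of such a renorming, since general Banach spaces need not admit an equivalent Kadec norm. I would handle this by contraposition: assume $X$ is not Asplund, so by a classical characterization there is a separable subspace $Y \subseteq X$ with nonseparable dual $Y^*$. The goal is then to construct an equivalent norm $|\cdot|$ on $X$ such that $(B_{(X,|\cdot|)^{**}},w^*)$ fails to admit any retractional skeleton, contradicting (iii). The construction would exploit the uncountable structure inside $Y^*$ (for instance, a biorthogonal-type system witnessing nonseparability) to engineer an equivalent norm whose bidual ball contains a topological obstruction — such as a point not reachable by a sequence from any candidate dense, countably closed $D$ — thereby violating the Fréchet--Urysohn property forced by Lemma \ref{lBasicSkeleton}(ii). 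Producing this pathological renorming, and verifying that it indeed destroys every retractional skeleton, is the technical core of the argument; this mirrors in the noncommutative setting the ``commutative'' unpublished argument of Kalenda referenced by the author concerning Valdivia bidual balls.
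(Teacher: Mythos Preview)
Your treatment of $(i)\Rightarrow(ii)\Rightarrow(iii)$ matches the paper's and is fine.

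For $(iii)\Rightarrow(i)$ you start exactly as the paper does --- by contraposition, passing to a separable subspace $Y\subset X$ that is not Asplund --- but then you leave the decisive step unspecified. Your proposal to build a bad renorming of $X$ directly from ``biorthogonal-type'' data in $Y^*$ and detect a Fr\'echet--Urysohn obstruction is vague, and it is not clear how to carry it out; more importantly, it overlooks the structural shortcut that makes the argument work. The point you are missing is that the Kadec-norm hypothesis of Proposition \ref{pKadec} should be applied not to $X$ but to the \emph{separable} space $Y$, which always admits an equivalent Kadec (indeed LUR) norm. With such a norm $|\cdot|$ on $Y$, Proposition \ref{pKadec} applied to $(Y,|\cdot|)$ shows immediately that $(B_{(Y,|\cdot|)^{**}},w^*)$ has no retractional skeleton --- no ad hoc obstruction needs to be manufactured.

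What then remains is to push this failure up from $Y$ to $X$, and this is exactly the content of Lemma \ref{lAsplund}: it says that the hypothesis of (iii) passes to every closed subspace, so if $Y$ admits a renorming whose bidual ball is not in $\mathcal{R}_0$, then so does $X$. The renorming construction you allude to is precisely that lemma; its proof embeds a translate of $B_{(Y,|\cdot|)^{**}}$ as a closed $G_\delta$ face of a suitable bidual ball of $X$ and invokes Lemma \ref{lBasicSkeleton}(iv). So the missing ingredients in your sketch are (a) the observation that Kadec renormability is automatic on the separable $Y$, allowing Proposition \ref{pKadec} to fire there, and (b) the explicit transfer mechanism of Lemma \ref{lAsplund}.
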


Let us recall that in \cite{kalenda01} there is constructed an Asplund space $X$ such that the bidual unit ball does not have a commutative retractional
skeleton; i.e., is not Valdivia. Consequently, $X^*$ does not have a commutative 1-projectional skeleton; i.e., $X^*$ is not 1-Plichko. Thus, conditions (ii) and (iii) in Theorem \ref{tAsplundRenorming} may not be replaced by its
commutative versions. Therefore, the following question, raised already in \cite{kalenda01}, seems to be interesting. It would give a
characterization of those spaces, which have a Valdivia bidual unit ball under every equivalent renorming of $X$.

\begin{question}Suppose that $X$ is a Banach space such that for every equivalent norm on $X$ the bidual unit ball has a
commutative retractional skeleton; i.e., it is Valdivia. Is $X$ in the class $(T)$?
\end{question}

Now we are going to prove Theorem \ref{tAsplundRenorming}. First, we need the following statement. It is an analogy to the statement contained in the unpublished remark by O. Kalenda mentioned above, where the result is proved for the class of Valdivia compact spaces.

\begin{lemma}\label{lAsplund}Let $(X,\|\cdot\|)$ be a Banach space such that $(B_{(X,|\cdot|)^{**},w^*})\in\R_0$ whenever $|\cdot|$ is an equivalent
norm on $X$. Then each subspace of $X$ has the same property.
\end{lemma}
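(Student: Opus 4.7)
Fix a subspace $Y\subset X$ and an equivalent norm $|\cdot|_Y$ on $Y$; I must produce a retractional skeleton on $(B_{(Y,|\cdot|_Y)^{**}},w^*)$. My plan is to extend $|\cdot|_Y$ to an equivalent norm on $X$, realize $B_{(Y,|\cdot|_Y)^{**}}$ as a $w^*$-closed convex subset of $B_{X^{**}}$, apply the hypothesis to the latter, and then transfer a retractional skeleton via Lemma \ref{lBasicSkeleton}(iii).

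Concretely, extend $|\cdot|_Y$ via the standard formula $|x|:=\inf\{M\|x-y\|+|y|_Y:y\in Y\}$, choosing $M\ge\sup_{0\ne y\in Y}\|y\|/|y|_Y$ large enough that $|y|=|y|_Y$ for every $y\in Y$ and $|\cdot|$ is equivalent to $\|\cdot\|$ on $X$. Then the inclusion $i:(Y,|\cdot|_Y)\hookrightarrow(X,|\cdot|)$ is isometric, so its second adjoint $i^{**}$ is a $w^*-w^*$ continuous isometric embedding with image $(Y^\perp)^\perp\subset X^{**}$. Consequently $L:=i^{**}(B_{(Y,|\cdot|_Y)^{**}})=K\cap(Y^\perp)^\perp$ is a $w^*$-closed convex subset of $K:=(B_{(X,|\cdot|)^{**}},w^*)$, and by the hypothesis $K$ admits a retractional skeleton with induced set $D$. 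Provided $L\cap D$ is dense in $L$, Lemma \ref{lBasicSkeleton}(iii) gives $L\in\R_0$, and transporting back through the homeomorphism $(i^{**})^{-1}$ yields the required retractional skeleton on $(B_{(Y,|\cdot|_Y)^{**}},w^*)$.

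The main obstacle is precisely the density of $L\cap D$ in $L$: $L$ need not be $G_\delta$ in $K$, so Lemma \ref{lBasicSkeleton}(iv) is unavailable, and an arbitrary extension $|\cdot|$ provides no a priori control of $D$ against $L$. To handle this I exploit the full hypothesis that a retractional skeleton exists for \emph{every} equivalent norm on $X$, adapting Kalenda's unpublished remark from the Valdivia setting. If density fails for the first extension $|\cdot|$, one perturbs it to another equivalent norm $|\cdot|'$ on $X$ still coinciding with $|\cdot|_Y$ on $Y$, chosen so that the induced set $D'$ of the retractional skeleton on $B_{(X,|\cdot|')^{**}}$ supplied by the hypothesis intersects the corresponding image $L'$ of $B_{(Y,|\cdot|_Y)^{**}}$ densely; roughly, the perturbation is designed so that elements of $B_Y$, which is $w^*$-dense in $L'$ by Goldstine's theorem, are forced into $D'$, and then Lemma \ref{lBasicSkeleton}(v) confirms that $L'\cap D'$ is itself induced by a retractional skeleton on $L'$. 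The heart of the proof is thus the construction of the perturbation $|\cdot|'$, which is the noncommutative analogue of Kalenda's Valdivia argument.
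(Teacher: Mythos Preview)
Your proposal correctly locates the crux of the argument---density of $L\cap D$ in $L$---but does not actually establish it. You write that ``one perturbs'' the norm so that the induced set $D'$ of the new skeleton meets $L'$ densely, and that $B_Y$ is ``forced into $D'$'', yet you supply no mechanism for this. The hypothesis only guarantees \emph{some} retractional skeleton on each bidual ball; it gives you no control over which dense countably closed set is induced. There is no evident way to design a norm on $X$ so that the unknown set $D'$ must contain a prescribed set such as $B_Y$. As written, the proof stops exactly at the point where the real work begins.

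The paper's argument avoids this difficulty altogether by a different construction. Rather than embedding $B_{(Y,|\cdot|_Y)^{**}}$ as the (generally non-$G_\delta$) slice $K\cap(Y^\bot)^\bot$, it builds an equivalent norm on $X$ whose bidual ball $B$ contains a \emph{translate} $x_0+M$ of $M=B_{(Y,|\cdot|_Y)^{**}}$ as the face $\{F\in B:F(f)=1\}$ determined by a single functional $f\in X^*$. This face is automatically $w^*$-closed and $w^*$-$G_\delta$, so Lemma~\ref{lBasicSkeleton}(iv) applies directly and no density argument is needed. Concretely: pick $f\in X^*$ and $x_0\in X\setminus Y$ with $f\!\upharpoonright_Y=0$, $f(x_0)=1$; set $\|x\|_1=|f(x)|+\|x-f(x)x_0\|$, $N=\{F\in B_{(X,\|\cdot\|_1)^{**}}:F(f)=0\}$, and $B=\conv\{N\cup(M+x_0)\cup(M-x_0)\}$. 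One checks that $B$ is the bidual unit ball for an equivalent norm on $X$, and that $x_0+M=\{F\in B:F(f)=1\}$. If $B$ had a retractional skeleton, Lemma~\ref{lBasicSkeleton}(iv) would give one on $x_0+M\cong M$, contradicting the assumed failure for $Y$. The point is that the $G_\delta$ property is \emph{engineered} by the choice of norm, not obtained by controlling the skeleton.
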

\begin{proof}In order to get a contradiction, let $Y$ be a subspace of $X$ with an equivalent norm $|\cdot|$ such that
$B_{(Y,|\cdot|)^{**}}$ does not have a retractional skeleton. Then $Y$ is a proper subspace of $X$ and hence there are $f\in X^*$ and $x_0\in
X\setminus Y$ such that
$f\!\!\upharpoonright_{Y} = 0$ and $f(x_0) = 1$. The formula
$$\|x\|_1 = |f(x)| + \|x-f(x)x_0\|$$
clearly defines an equivalent norm on $X$.

In the following we will consider any Banach space canonically embedded in its second dual. Further, having a subspace $Z$ of
$X$, we may consider $Z^{**}$ as a subspace of $X^{**}$ (if $i:Z\to X$ is the identity, then $i^{**}$ is a $w^*-w^*$ continuous
linear isometry from $Z^{**}$ onto $(Z^\bot)^\bot = \ov{Z}^{w^*}$; moreover, $X\cap Z^{**} = Z$).

Thus, $M = B_{(Y,|\cdot|)^{**}}$ can be viewed as a $w^*$-compact convex and symmetric subset of $X^{**}$. Put $N = \{F\in
B_{(X,\|\cdot\|_1)^{**}}:\;F(f) = 0\}$ and
$$B = \conv\{N\cup(M+x_0)\cup(M-x_0)\}.$$
Then $B$ is a $w^*$-compact convex and symmetric subset of $X^{**}$. Let us fix a $c>0$ such that $\|y\|\leq c |y|$ for every
$y\in Y$. Then it is easy to verify that
$$\frac 12 B_{(X,\|\cdot\|_1)^{**}}\subset B\subset (1+c)B_{(X,\|\cdot\|_1)^{**}}.$$
Thus, there is an equivalent norm $\|\cdot\|_{**}$ on $X^{**}$ such that $B$ is the unit ball on $(X^{**},\|\cdot\|_{**})$. Moreover, as $B$ is $w^*$-closed,
the norm $\|\cdot\|_{**}$ is a dual norm to some norm $\|\cdot\|_{*}$ on $X^*$ and $B_\circ = \{x^*\in X^*:\;F(x^*)\leq 1\text{
for }F\in B\}$ is the unit ball in
$(X^*,\|\cdot\|_*)$ (see \cite[Fact 5.4]{DGZ}). Notice that $B\cap X$ is $w^*$-dense in $B$.

Indeed, first we put $K = \{F\in
X^{**}:\;F(f) = 0\}$. Now we observe that $K = (\{x\in X:\;f(x) = 0\}^\bot)^\bot$; hence, we may
identify $K$ with $\{x\in X:f(x) = 0\}^{**}$. Then $N\cap X$ may be identified with $K\cap B_X$, which is dense in $N = K\cap
B_{(X,\|\cdot\|_1)^{**}}$. Similarly, $M\cap X$ is dense in $M$. As $N\cap X$ (resp. $M\cap X$) is dense in $N$ (resp. $M$) and
$x_0\in X$, $B\cap X$ is dense in $B$.

Consequently, $B_\circ$ is $w^*$-closed in $X^{*}$ and, by \cite[Fact 5.4]{DGZ}, $\|\cdot\|_{*}$ is a dual norm to some norm on
$X$. Hence, $B$ is bidual unit ball with respect to an equivalent norm on $X$. Now it suffices to observe that $B$ does not have a retractional skeleton.

Let us suppose that $B$ has a retractional skeleton. Then
$$x_0 + M = \{F\in B:\;F(f) = 1\}$$
is a $w^*$-closed $w^*$-$G_\delta$ subset of $B$; hence, by Lemma \ref{lBasicSkeleton}, it has a retractional skeleton. This is
a contradiction with the choice of $M$.
\end{proof}

\begin{proof}[Proof of Theorem \ref{tAsplundRenorming}]The implication (i)$\Rightarrow$(ii) follows from Theorem \ref{tAsplund} and (ii)$\Rightarrow$(iii) is a
consequence of Theorem \ref{tPSkeletonIffRskeleton}. Finally, suppose that $X$ is not Asplund. Then there is a separable subspace
$Y\subset X$ which is not Asplund. Let $|\cdot|$ be an equivalent Kadec norm on $Y$. By Proposition \ref{pKadec},
$B_{(Y,|\cdot|)^{**}}$ does not have a retractional skeleton. Hence, by Lemma \ref{lAsplund}, there is an equivalent norm on
$X$ such that the bidual unit ball does not have a retractional skeleton.
\end{proof}

The following question has already been articulated in \cite{kalenda01} and \cite{avilesKal}.

\begin{question}Let $X$ be an Asplund space. Is there an equivalent norm on $X$ such that $X^*$ has a commutative 1-projectional
skeleton; i.e., is 1-Plichko, or equivalently has a countably 1-norming Markushevich basis?
\end{question}

\section{Some more applications}

In the last section we collect some more applications of the results contained in previous sections. Those are straightforward analogies to results contained in \cite{kalendaSurvey}, where similar statements are proved for Valdivia compact spaces and Plichko spaces.

First, we give some statements concerning open continuous surjections.

\begin{lemma}Let $\varphi:K\to L$ be an open continuous surjection between compact spaces. If $L$ has a dense set of $G_\delta$ points and $D$ is a set induced by a retractional skeleton in $K$, then $\varphi(D)$ is a set induced by a retractional skeleton in $L$.
\end{lemma}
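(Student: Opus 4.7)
My plan is to verify condition (ii) of Theorem \ref{tContImage}, namely that $\varphi^*\C(L) = \{f\circ\varphi : f\in\C(L)\}$ is $\tau_p(D)$-closed in $\C(K)$. Since $\varphi$ is a continuous surjection between compact Hausdorff spaces, it is a quotient mapping, so a function $g\in\C(K)$ lies in $\varphi^*\C(L)$ if and only if $g$ is constant on the fibers of $\varphi$. Thus, given a net $f_\nu\in\C(L)$ with $f_\nu\circ\varphi\stackrel{\tau_p(D)}{\to}g$, I need to show that $g(k_1)=g(k_2)$ whenever $\varphi(k_1)=\varphi(k_2)$.

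The first step handles fibers over $G_\delta$-points of $L$. If $\ell\in L$ is a $G_\delta$-point, then $\varphi^{-1}(\ell)$ is a closed $G_\delta$ subset of $K$, so by Lemma \ref{lBasicSkeleton}(iv) the set $\varphi^{-1}(\ell)\cap D$ is dense in $\varphi^{-1}(\ell)$. For any such $d$, the value $f_\nu(\varphi(d)) = f_\nu(\ell)$ depends only on $\ell$, so the net $(f_\nu(\ell))$ converges to the common value $g(d)$ for every $d\in\varphi^{-1}(\ell)\cap D$. Hence $g$ is constant, equal to some $c_\ell := \lim_\nu f_\nu(\ell)$, on the dense subset $\varphi^{-1}(\ell)\cap D$ of $\varphi^{-1}(\ell)$, and by continuity $g\equiv c_\ell$ on all of $\varphi^{-1}(\ell)$.

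The second step extends this to an arbitrary fiber and is the part where openness of $\varphi$ plus density of the $G_\delta$-points enter crucially. Fix $\ell\in L$ and $k_1,k_2\in\varphi^{-1}(\ell)$ and $\eps>0$. Choose open neighborhoods $U_i$ of $k_i$ on which $g$ varies by less than $\eps$. Because $\varphi$ is open, $\varphi(U_1)\cap\varphi(U_2)$ is an open neighborhood of $\ell$ in $L$, and so it contains some $G_\delta$-point $\ell'$. Picking preimages $k'_1\in U_1$, $k'_2\in U_2$ with $\varphi(k'_1)=\varphi(k'_2)=\ell'$, the first step yields $g(k'_1)=g(k'_2)=c_{\ell'}$, whence $|g(k_1)-g(k_2)|<2\eps$; letting $\eps\to0$ gives $g(k_1)=g(k_2)$.

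The main obstacle is precisely the passage from fibers over $G_\delta$-points to arbitrary fibers; once one realizes that the open-mapping property transports the denseness of $G_\delta$-points in $L$ into an approximation inside any two fibers of $\varphi$, everything falls into place. Having verified (ii) of Theorem \ref{tContImage}, we conclude that $\varphi(D)$ is induced by a retractional skeleton in $L$.
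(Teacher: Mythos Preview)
Your proof is correct and is essentially the approach the paper has in mind: the paper says to follow the proof of \cite[Lemma 3.23]{kalendaSurvey}, replacing the Valdivia ingredients by Theorem~\ref{tContImage} and Lemma~\ref{lBasicSkeleton}, and that proof verifies exactly the $\tau_p(D)$-closedness of $\varphi^*\C(L)$ via the two-step argument you give (constancy on fibers over $G_\delta$-points using the density property of $D$, then passage to arbitrary fibers by openness of $\varphi$ and density of $G_\delta$-points). There is nothing to add.
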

\begin{proof}In order to prove the lemma it is enough to follow the lines of the proof from \cite[Lemma 3.23]{kalendaSurvey}, using only the set induced by a retractional skeleton instead of the dense $\Sigma$-subset. Instead of {\cite[Theorem 3.22]{kalendaSurvey}} and \cite[Lemma 1.11]{kalendaSurvey} we use Theorem \ref{tContImage} and Lemma \ref{lBasicSkeleton}.
\end{proof}

As an immediate consequence we get the following theorem.

\begin{thm}\label{tOpenImage}Let $\varphi:K\to L$ be an open continuous surjection between compact spaces. If $L$ has a dense set of $G_\delta$ points and $K\in\R_0$, then $L\in\R_0$.\end{thm}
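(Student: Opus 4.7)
The plan is to reduce the theorem directly to the preceding lemma. Since $K \in \R_0$, by definition there is a retractional skeleton $\mathfrak{s} = \{r_s\}_{s\in\Gamma}$ on $K$, and it induces the set $D = D(\mathfrak{s}) = \bigcup_{s\in\Gamma} r_s[K] \subset K$. The hypotheses of the lemma are then satisfied: $\varphi:K\to L$ is an open continuous surjection, $L$ has a dense set of $G_\delta$ points, and $D$ is induced by a retractional skeleton in $K$.

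Applying the lemma, we conclude that $\varphi(D)$ is a set induced by a retractional skeleton in $L$. In particular, there exists some retractional skeleton in $L$ whose induced set is $\varphi(D)$; thus $L$ carries a retractional skeleton, which means $L \in \R_0$ by definition of the class $\R_0$.

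The only step here is invoking the lemma — there is no separate obstacle, since the lemma has already done all the real work (transferring the skeleton across $\varphi$, using openness of $\varphi$ and the dense $G_\delta$ points in $L$). The theorem is essentially a restatement of the lemma at the level of membership in $\R_0$, obtained by forgetting which particular dense subset is the induced one. This explains the paper's remark that it follows as an immediate consequence.
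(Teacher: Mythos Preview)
Your proposal is correct and matches the paper's approach exactly: the paper states the theorem as ``an immediate consequence'' of the preceding lemma, and your argument spells out precisely that deduction---pick a retractional skeleton on $K$, apply the lemma to its induced set $D$, and conclude $L\in\R_0$.
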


It is easy to check that any open continuous image of a compact space with a dense set of $G_\delta$ points has again this property (see \cite[Lemma 4.3]{kalenda00}). Thus, if $K\in\R_0$ has a dense set of $G_\delta$ points and $\varphi$ is an open continuous surjection, then $\varphi(K)\in\R_0$.

However, some assumption on $K$ is needed as there exists a Valdivia compact space $K$ of weight $\aleph_1$ and an open continuous surjection $\varphi$ such that $\varphi(K)$ is not Valdivia (and hence does not have a retractional skeleton); see \cite{kubUsp05} for more details.

Let us have a closer look at products.

\begin{lemma}Let $K$ and $L$ be nonempty compact spaces. If $L$ has a dense set of $G_\delta$ points and $K\times L\in\R_0$, then both $K$ and $L$ have a retractional skeleton as well.
\end{lemma}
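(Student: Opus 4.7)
The plan is to handle the two conclusions separately, using tools already established in the excerpt. Since both $K$ and $L$ are nonempty, the coordinate projections $\pi_K\colon K\times L\to K$ and $\pi_L\colon K\times L\to L$ are open continuous surjections between compact spaces.

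For $L$: the hypothesis that $L$ has a dense set of $G_\delta$ points is precisely what Theorem \ref{tOpenImage} requires of the target. Applying Theorem \ref{tOpenImage} to $\pi_L$ yields $L\in\R_0$ immediately.

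For $K$: Theorem \ref{tOpenImage} cannot be applied directly through $\pi_K$, since $K$ itself is not assumed to have a dense set of $G_\delta$ points. My plan instead is to isolate a copy of $K$ inside $K\times L$ which is a closed $G_\delta$ subset, and then invoke the second half of Lemma \ref{lBasicSkeleton} (iv). Pick any $G_\delta$ point $l_0\in L$; this exists because $L$ is nonempty and has a dense set of such points. Then $\{l_0\}$ is closed in the Hausdorff space $L$ and $G_\delta$ in $L$, so $K\times\{l_0\}$ is a closed $G_\delta$ subset of $K\times L$. By Lemma \ref{lBasicSkeleton} (iv), $K\times\{l_0\}$ has a retractional skeleton, and since the obvious map $k\mapsto (k,l_0)$ is a homeomorphism of $K$ onto $K\times\{l_0\}$, we conclude $K\in\R_0$.

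There is no real obstacle here: both conclusions reduce to short applications of previously established results, Theorem \ref{tOpenImage} for $L$ and Lemma \ref{lBasicSkeleton} (iv) for $K$. The only mildly clever point is to realize $K$ as a closed $G_\delta$ slice of the product rather than trying to obtain it as the image of the open projection $\pi_K$, for which the hypothesis is not available on the $K$ side.
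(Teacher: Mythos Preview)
Your proof is correct and follows essentially the same approach as the paper: the paper simply refers to the proof of \cite[Proposition 4.7]{kalenda00}, replacing the Valdivia-compact tools there by Theorem \ref{tOpenImage} and Lemma \ref{lBasicSkeleton}, which amounts precisely to your argument --- use the open projection $\pi_L$ together with Theorem \ref{tOpenImage} to get $L\in\R_0$, and realize $K$ as the closed $G_\delta$ slice $K\times\{l_0\}$ for a $G_\delta$ point $l_0\in L$ to get $K\in\R_0$ via Lemma \ref{lBasicSkeleton} (iv).
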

\begin{proof}In order to prove the lemma it is enough to follow the lines of the proof from \cite[Proposition 4.7]{kalenda00}, using only Theorem \ref{tOpenImage} and Lemma \ref{lBasicSkeleton} instead of {\cite[Theorem 4.5]{kalenda00}} and \cite[Lemma 1.7]{kalenda00}.
\end{proof}

Let us recall that the class $\R_0$ is closed under arbitrary products (see \cite[Proposition 3.1]{kubis}). Thus, the following theorem follows immediately.

\begin{thm}Let $(K_\alpha)_{\alpha\in A}$ be a collection of nonempty compact spaces such that each $K_\alpha$ has a dense set of $G_\delta$ points. Then $\prod_{\alpha\in A}K_\alpha$ has a retractional skeleton if and only if each $K_\alpha$ has a retractional skeleton.\end{thm}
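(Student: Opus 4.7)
The ``if'' direction is immediate from the result \cite[Proposition 3.1]{kubis} cited just before the theorem, which says that $\R_0$ is closed under arbitrary products; so I only need to handle the ``only if'' direction.

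My plan for the ``only if'' direction is to peel off one factor at a time and invoke the preceding lemma. Suppose $\prod_{\alpha\in A}K_\alpha\in\R_0$ and fix an arbitrary $\alpha_0\in A$. Write
\[
\prod_{\alpha\in A}K_\alpha\;\cong\;\Bigl(\prod_{\alpha\in A,\,\alpha\neq\alpha_0}K_\alpha\Bigr)\times K_{\alpha_0}.
\]
In the notation of the preceding lemma I take $K=\prod_{\alpha\neq\alpha_0}K_\alpha$ and $L=K_{\alpha_0}$. By the hypothesis on the family, $L=K_{\alpha_0}$ has a dense set of $G_\delta$ points, and by assumption $K\times L\in\R_0$. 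Hence the lemma applies and yields that both factors, in particular $K_{\alpha_0}$, have a retractional skeleton. Since $\alpha_0\in A$ was arbitrary, every $K_\alpha$ has a retractional skeleton.

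There is essentially no obstacle here: the whole content of the argument is the observation that in the two-factor decomposition used to apply the preceding lemma, the single factor $K_{\alpha_0}$ (rather than the potentially huge complementary product) plays the role of the space that must have a dense set of $G_\delta$ points, and that hypothesis is directly given. The only mild point worth mentioning is that nothing needs to be verified about the complementary product $\prod_{\alpha\neq\alpha_0}K_\alpha$ — it is fine for it to fail the $G_\delta$-points condition, because the lemma only requires this of one factor.
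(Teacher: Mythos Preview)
Your proposal is correct and is precisely the argument the paper has in mind: the ``if'' direction is the product stability of $\R_0$ from \cite[Proposition 3.1]{kubis}, and the ``only if'' direction is obtained by writing the product as $\bigl(\prod_{\alpha\neq\alpha_0}K_\alpha\bigr)\times K_{\alpha_0}$ and applying the preceding two-factor lemma with $L=K_{\alpha_0}$.
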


However, the following question seems to be open.

\begin{question}Suppose that $K$ and $L$ are compact spaces such that $K\times L$ has a retractional skeleton. Do both $K$ and $L$ have a retractional skeleton?
\end{question}

Concerning the stability of the class of spaces with a projectional skeleton, not much is known. Using the results of the previous sections we can obtain some information.

\begin{thm}\label{tSepSubspace}If $X$ is a Banach space with a 1-projectional skeleton and $Y\subset X$ is a separable subspace, then $X/Y$ has a 1-projectional skeleton.\end{thm}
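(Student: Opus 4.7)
The plan is to exploit separability of $Y$ to fit it inside a single piece $P_{s_0}X$ of the skeleton, and then pass to the cofinal set $\Gamma' = \{s\in\Gamma:\;s\geq s_0\}$, on which $P_s$ is the identity on $Y$ and hence descends to a well-defined projection on $X/Y$.

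First I would fix a 1-projectional skeleton $\{P_s\}_{s\in\Gamma}$ in $X$ and pick a countable dense set $\{y_n\}_{n\in\en}$ in $Y$. Using axiom (ii) of a projectional skeleton, for each $n$ choose $s_n\in\Gamma$ with $y_n\in P_{s_n}X$; by $\sigma$-completeness let $s_0 = \sup_n s_n$. Axiom (iv) then gives $P_{s_0}X = \ov{\bigcup_n P_{s_n}X}$, hence $Y\subset P_{s_0}X$. Set $\Gamma' = \{s\in\Gamma:\;s\geq s_0\}$; this is clearly unbounded (by up-directedness of $\Gamma$) and $\sigma$-closed in $\Gamma$, so $\{P_s\}_{s\in\Gamma'}$ is still a 1-projectional skeleton in $X$. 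The crucial observation is that for every $s\in\Gamma'$, axiom (iii) gives $P_s\circ P_{s_0} = P_{s_0}$, so $P_s$ restricts to the identity on $P_{s_0}X$, and in particular $P_s(y) = y$ for each $y\in Y\subset P_{s_0}X$.

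Let $q:X\to X/Y$ denote the quotient mapping. Since $P_s(Y)\subset Y$ for $s\in\Gamma'$, the formula
$$\tilde P_s(q(x)) = q(P_s(x)),\qquad x\in X,\;s\in\Gamma',$$
defines a bounded linear projection on $X/Y$ with $\|\tilde P_s\|\leq\|P_s\|\leq 1$. I would then verify the skeleton axioms for $\{\tilde P_s\}_{s\in\Gamma'}$: separability of $\tilde P_s(X/Y) = q(P_sX)$ is inherited from $P_sX$; the union property $X/Y = \bigcup_s \tilde P_s(X/Y)$ is immediate from $X = \bigcup_s P_sX$; the commutation relations in (iii) descend directly from the corresponding relations for $P_s$; and for increasing $(s_n)$ in $\Gamma'$ with $t = \sup s_n$, Remark \ref{remarkSkeleton} yields $P_t(x) = \lim_n P_{s_n}(x)$, hence $\tilde P_t(q(x)) = q(P_t(x)) = \lim_n q(P_{s_n}(x)) = \lim_n \tilde P_{s_n}(q(x))$, from which $\tilde P_t(X/Y) = \ov{\bigcup_n \tilde P_{s_n}(X/Y)}$ follows by the standard argument.

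There is essentially no main obstacle: the only place where something could go wrong is the descent of $P_s$ to a quotient projection, which is exactly why one needs the invariance $P_s(Y)\subset Y$; absorbing $Y$ into a single $P_{s_0}X$ and then passing to the tail $\Gamma'\supset\{s\geq s_0\}$ handles this cleanly and automatically gives invariance of $Y$ under every $P_s$ in the new skeleton. The separability of $Y$ is used in precisely one place, namely to ensure that the countably many $s_n$ admit a common upper bound via $\sigma$-completeness of $\Gamma$.
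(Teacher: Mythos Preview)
Your argument is correct, modulo one cosmetic point: $\sigma$-completeness only guarantees suprema of \emph{increasing} sequences, so before taking $s_0=\sup_n s_n$ you should first replace $(s_n)$ by an increasing sequence dominating it (using up-directedness); the rest goes through exactly as you wrote.

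Your route, however, is genuinely different from the paper's. The paper argues by duality: starting from the 1-projectional skeleton in $X$ it passes (via Theorem~\ref{tPSkeletonIffRskeleton}) to the induced retractional skeleton on $(B_{X^*},w^*)$, observes that $Y^\bot\cap B_{X^*}$ is a closed $G_\delta$ subset because $Y$ is separable, invokes Lemma~\ref{lBasicSkeleton}(iv) to get a retractional skeleton there with a convex symmetric induced set, and then applies Theorem~\ref{tPSkeletonIffRskeleton} once more using $(X/Y)^*=Y^\bot$ to conclude. Your approach is more elementary and entirely self-contained: you never leave $X$, use none of the duality machinery of Section~4, and the only external input is Remark~\ref{remarkSkeleton}. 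The paper's proof, by contrast, illustrates that the result is really a corollary of the transfer principle between projectional and retractional skeletons developed earlier, and it makes transparent \emph{where} separability enters (namely, as the $G_\delta$ property of $Y^\bot$). Your direct descent of the projections to the quotient is arguably the more natural argument for this particular statement; the paper's detour through $B_{X^*}$ pays off mainly as a demonstration of the theory.
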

\begin{proof}In the proof we follow the ideas from \cite[Proposition 4.36]{kalendaSurvey}. Let $D$ be a set induced by a 1-projectional skeleton in $X$. Then $D\cap B_{X^*}$ is induced by a retractional skeleton in $B_{X^*}$. As $Y$ is separable, $Y^\bot$ is $w^*$-$G_\delta$ and $w^*$-closed subset of $X^*$. By Lemma \ref{lBasicSkeleton}, $D\cap B_{X^*}\cap Y^\bot$ is a convex symmetric set induced by a retractional skeleton in $B_{X^*}\cap Y^\bot$. Using the identification $(X/Y)^* = Y^\bot$ and Theorem \ref{tPSkeletonIffRskeleton}, $X/Y$ has a 1-projectional skeleton.
\end{proof}
\begin{thm}Let $X$ be a Banach space and $Y\subset X$ a closed subspace such that $X/Y$ is separable. Then:
\begin{enumerate}[\upshape (i)]
	\item If $Y$ is complemented in $X$, then $Y$ has a projectional skeleton if and only if $X$ has a projectional skeleton.
	\item If $Y$ is 1-complemented and $X$ has a 1-projectional skeleton, then $Y$ has a 1-projectional skeleton.
\end{enumerate}
\end{thm}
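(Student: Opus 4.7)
Let $Q : X \to Y$ be the bounded projection and write $Z = \ker Q$, so $X = Y \oplus Z$ and $Z$ is isomorphic to $X/Y$, hence separable.

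For part (ii), the hypothesis $\|Q\| = 1$ gives, for every $y \in Y$ and $z \in Z$, the estimate $\|y\| = \|Q(y+z)\| \leq \|y+z\|$. This yields $\|y\| \leq \|y + Z\|_{X/Z}$, while the reverse inequality is trivial, so $y \mapsto y + Z$ is an isometric isomorphism $Y \to X/Z$. Since $Z$ is a separable subspace of $X$, Theorem \ref{tSepSubspace} supplies a $1$-projectional skeleton on $X/Z$, which transfers isometrically to $Y$.

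For the ``only if'' direction of (i), given a projectional skeleton $\{P_s\}_{s\in\Gamma}$ on $Y$ I would set $\tilde{P}_s = P_s Q + (I - Q)$, equivalently $\tilde{P}_s(y+z) = P_s(y) + z$. One checks directly that $\{\tilde{P}_s\}_{s \in \Gamma}$ is a projectional skeleton on $X$: each $\tilde{P}_s$ is a bounded projection with separable range $P_s(Y) + Z$; the commutation identities and the sup-axiom transfer from $\{P_s\}$ using continuity of $Q$; and $\bigcup_s \tilde{P}_s(X) = Y + Z = X$.

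For the harder ``if'' direction of (i), suppose $\{P_s\}_{s\in\Gamma}$ is a projectional skeleton on $X$ and $(z_n)$ is dense in $Z$, and set
\[
\Gamma' = \{s \in \Gamma : Z \subset P_s(X)\}.
\]
Standard arguments show $\Gamma'$ is unbounded (given $s_0$, take $t_n \geq s_0$ with $z_n \in P_{t_n}(X)$ and let $s = \sup_n t_n$, so that by Remark \ref{remarkSkeleton} $P_s(z_n) = \lim_m P_{t_m}(z_n) = z_n$, hence $Z \subset P_s(X)$) and $\sigma$-closed (the same limit argument). For $s \in \Gamma'$ the projection $P_s$ is the identity on $Z$, so $\tilde{P}_s(x + Z) := P_s(x) + Z$ is a well-defined bounded projection on $X/Z$ with separable range $P_s(X)/Z$. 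The projectional-skeleton axioms transfer to $\{\tilde{P}_s\}_{s \in \Gamma'}$; the only delicate point is axiom (iv), which reduces to the identity $\overline{q(B)} = q(\overline{B})$ for $B = \bigcup_n P_{s_n}(X)$ and $q : X \to X/Z$ the quotient map. This holds because $Z \subset P_{s_n}(X)$ makes $B$ a union of $Z$-cosets: given $q(b_n) \to q(x_0)$ in $X/Z$, one chooses $z_n \in Z$ with $b_n - z_n \to x_0$ in $X$, and $b_n - z_n \in B$. Finally, complementedness of $Y$ provides the linear isomorphism $y \mapsto y + Z$ between $Y$ and $X/Z$, so the skeleton on $X/Z$ pulls back to a projectional skeleton on $Y$.

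The principal obstacle is the verification of axiom (iv) on $X/Z$: the quotient map does not commute with closure in general, and the condition $Z \subset P_{s_n}(X)$ is used essentially to make the ``lift-and-correct'' argument go through. Note also that (i) does not assume the skeleton on $X$ is $1$-bounded, so Theorem \ref{tSepSubspace} cannot be invoked directly as in the proof of (ii); the direct construction above is what makes (i) work for arbitrary (not necessarily norm-one) complements.
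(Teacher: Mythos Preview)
Your argument is correct. For (ii) you do exactly what the paper does: invoke Theorem~\ref{tSepSubspace} and transfer along the isometry $Y\to X/Z$. For the ``only if'' direction of (i) your explicit construction $\tilde P_s=P_sQ+(I-Q)$ is precisely the computation underlying the paper's one-line appeal to closure of the class under $\ell_1$-sums (\cite[Theorem~17]{kubis}).

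The genuine difference is in the ``if'' direction of (i). The paper simply cites Theorem~\ref{tSepSubspace} again; since that theorem is stated for $1$-projectional skeletons, this tacitly uses a renorming of $X$ to make the given skeleton a $1$-skeleton, then transfers the resulting $1$-skeleton on $X/Z$ to a (merely bounded) skeleton on $Y$ via the isomorphism $Y\cong X/Z$. You instead give a self-contained construction: restrict to the cofinal $\sigma$-closed $\Gamma'=\{s:Z\subset P_sX\}$ and push the $P_s$ down to $X/Z$. This bypasses both the renorming and the retractional-skeleton machinery behind Theorem~\ref{tSepSubspace}; the price is the explicit check of axiom~(iv), which you handle correctly using $Z\subset B$. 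One small technical point: in your unboundedness argument the indices $t_n$ should be chosen \emph{increasing} (inductively take $t_{n+1}\geq t_n$ with $z_{n+1}\in P_{t_{n+1}}X$), so that $\sup_n t_n$ is guaranteed to exist in the $\sigma$-complete poset and Remark~\ref{remarkSkeleton} applies as you use it.
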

\begin{proof}Assertion (ii) and the ``if'' part of (i) follow from Theorem \ref{tSepSubspace}. The converse in (i) follows from the fact that the class of spaces with a projectional skeleton is closed under $\ell_1$-sums (see \cite[Theorem 17]{kubis}).
\end{proof}

However, the following question seems to be open.

\begin{question}Does every 1-complemented subspace of a space with a 1-projectional skeleton have a 1-projectional skeleton as well?
\end{question}

\begin{ack}
 The author would like to thank Ond\v{r}ej Kalenda for many useful remarks and discussions, in particular for pointing out the reference \cite{valdivia} and the unpublished remark mentioned above.
\end{ack}

\end{document}